\documentclass[a4paper,12pt]{article}
\usepackage{amsmath, amsthm}
\usepackage{mathtools}
\usepackage{amssymb}
\usepackage{amscd}
\usepackage{epic, eepic}
\usepackage{url}
\usepackage{color}
\usepackage[T2A]{fontenc}   
\usepackage[utf8]{inputenc} 
\usepackage{comment}
\usepackage{todonotes}
\usepackage{graphicx}
\usepackage{epstopdf}
\usepackage{enumerate}
\usepackage{array}
\usepackage{tabularx}
\usepackage{tikz}
\usepackage{enumitem}
\usepackage{tcolorbox}
\usepackage{pdflscape}
\usepackage{svg}
\usepackage{hyperref}

\usepackage{caption}
\usepackage{subcaption}

\newtheorem{theorem}{\bf Theorem}[section]
\newtheorem{lemma}[theorem]{\bf Lemma}
\newtheorem{cor}[theorem]{\bf Corollary}

\newtheorem{nota}[theorem]{\bf Notation}

\newtheorem{defi}[theorem]{\bf Definition}
\newtheorem{obs}[theorem]{\bf Observation}
\newtheorem{conjecture}[theorem]{\bf Conjecture}

\title{Extending partial edge-colorings of bounded size in Cartesian products of graphs}

\date{}

\author{Pál Bärnkopf \thanks{Alfréd Rényi Institute of Mathematics, Budapest, Hungary. Partially supported by the Counting in Sparse Graphs Lendület Research Group.
E-mail: {\tt barpal@renyi.hu}}
\and 
Ervin Győri \thanks{Alfréd Rényi Institute of Mathematics, Budapest, Hungary, Partially supported by the National Research, Development and Innovation Office NKFIH, grants K132696 and SNN135643, 	E-mail: {\tt gyori@renyi.hu}}
}

\begin{document}

\maketitle

\begin{abstract}

This paper studies edge-precoloring extensions in Cartesian products of graphs, motivated by a conjecture of Casselgren, Petros, and Fufa. We formulate a general hypothesis stating that if every edge-precoloring of $G$ and $H$ of sizes $k<\chi'(G)$ and $l<\chi'(H)$, respectively, is extendable, then any edge-precoloring of $G \square H$ of size $k+l+1$ can be extended to a proper $(\chi'(G)+\chi'(H))$-coloring. We provide partial progress toward this conjecture by establishing the result in cases where $k<\Delta(G)$, $G$ is a triangle-free $r$-regular graph and $H$ is a star, an even cycle, a path or, more generally, an arbitrary tree $F$. Furthermore, we prove the conjecture in the case where $G$ is a subcubic graph and $H = K_2$.

\textit{Keywords: Precoloring extension; edge-coloring; Bipartite graph; Cartesian product, Hypercube} 
\end{abstract}

\section{Introduction}

In this paper, we deal with proper edge-colorings of graphs. Throughout the paper, we often say just coloring instead of proper edge-coloring. The basic concepts not defined in the article can be found in the book \cite{diestel}. The edge chromatic number (or chromatic index) of a graph $G$ is denoted by $\chi'(G)$. According to Vizing's Theorem \cite{Vizing}, if $G$ is a simple graph with maximum degree $\Delta(G)$, then its chromatic index is either $\Delta(G)$ or $\Delta(G)+1$. For bipartite graphs, it is known that $\chi'(G) = \Delta(G)$ (König's edge-coloring theorem \cite{konig}). An (edge) \textit{precoloring} (or partial edge-coloring) of a graph $G$ is a proper edge-coloring of some edge set $E_0 \subseteq E(G)$. 

\begin{defi}
    \textup{We call a precoloring of some edge set $E_0$ \textit{extendable} if there is a proper edge-coloring with $\chi'(G)$ colors such that the color of the edges in $E_0$ are the prescribed colors. Such a coloring is called an \textit{extension} of the precoloring.}
\end{defi}


\begin{defi}
    \textup{The \textit{Cartesian product $G_1 \square G_2$ of graphs} $G_1=(V_1,E_1)$ and $G_2=(V_2,E_2)$ is the graph whose vertex set is the Cartesian product $V(G_1) \times V(G_2)$ and two vertices $(u_1,u_2)$ and $(v_1,v_2)$ are adjacent in $G_1 \square G_2$ if and only if either}
    \begin{itemize}
        \item \textup{$u_1=v_1$ and $u_2$ is adjacent to $v_2$ in $G_2$ or}
        \item \textup{$u_2=v_2$ and $u_1$ is adjacent to $v_1$ in $G_1$.}
    \end{itemize}
\end{defi}

\begin{nota}
    \textup{$G^d$ denotes the $d$-th power of the Cartesian product of $G$ with itself ($G^d = \underbrace{G \square G \square \cdots \square G}_{d}$).}
\end{nota}

\begin{nota}
    \textup{The $d$-dimensional hypercube, denoted $Q_d$, is the $d$-th power of the Cartesian product of $K_2$ with itself, i.e. $Q_d=K_2^d$.}
\end{nota}

Completion of partial (edge-)colorings of graphs has a long history. For instance, completion of partial Latin squares can be interpreted as an edge-coloring extension problem restricted to complete bipartite graphs, and this has been studied since as early as 1960; see, e.g. \cite{Smetaniuk}. The first known publication explicitly deals with edge-coloring extensions is by Marcotte and Seymour~\cite{Marcotte}, who studied when a particular necessary condition for extendability of a partial edge-coloring is also sufficient. Since then, it has been shown that the problem of extending a given edge-precoloring is an NP-hard problem, even for 3-regular bipartite graphs \cite{Easton, Fiala}.


Motivated by the result on hypercubes \cite{hypercube}, Casselgren, Petros and Fufa \cite{cartesian} extended the study of edge-precoloring extension of graphs with a particular focus on Evans-type questions for Cartesian products of graphs. Their primary interest is the following conjecture, which would be a far-reaching generalization of the main result of \cite{hypercube}.

\begin{conjecture} \label{basic} \cite{cartesian}
    If $G$ is a graph that every precoloring of at most $k < \chi'(G)$ edges can be extended to a proper $\chi'(G)$-edge-coloring, then every precoloring of at most $k+1$ edges of $G \square K_2$ is extendable to a proper $(\chi'(G)+1)$-edge-coloring of $G \square K_2$.
\end{conjecture}

They verify that this conjecture holds for trees, complete and complete bipartite graphs, as well as for graphs with small maximum degree. As a first step, we verify this conjecture in the case $k < \Delta(G)$ for triangle-free, regular graphs. This result implies a similar result replacing $K_2=K_{1,1}$ with $K_{1,n}$. Furthermore, we prove a similar statement for an even cycle (and consequently for any path). Based on the results, we formulate the following generalization of Conjecture \ref{basic}.

\begin{conjecture} \label{general}
    If $G$ is a graph that every precoloring of at most $k < \chi'(G)$ edges can be extended to a proper $\chi'(G)$-edge-coloring and $H$ is a graph that every precoloring of at most $l < \chi'(H)$ edges can be extended to a proper $\chi'(H)$-edge-coloring, then every precoloring of at most $k+l+1$ edges of $G \square H$ is extendable to a proper $(\chi'(G)+\chi'(H))$-edge-coloring of $G \square H$.
\end{conjecture}

Furthermore, Casselgren, Petros, and Fufa~\cite{cartesian} proved Conjecture~\ref{basic} for Class 1 graphs of maximum degree three. However, they noted that their method does not extend to Class 2 graphs. We prove Conjecture~\ref{basic} for Class 2 graphs of maximum degree three.

\section{Triangle-free $r$-regular graphs}

Before turning to the statements and their proofs, it is useful to introduce some notation and mention a few observations that we will use repeatedly in several of the proofs.

\begin{obs}
    \textup{Since we precolor at most $\Delta(G)$ edges in $G \square K_2$, and we color with $\chi'(G)+1$ colors, there necessarily exists at least one color that is not used in the precoloring. Let us denote the available colors as $\{1, \dots, \chi'(G)+1\}$ and (unless otherwise stated) let the color that is not used in the precoloring be $\chi'(G)+1$.}
\end{obs}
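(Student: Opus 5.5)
The observation is a pigeonhole count, and I will argue it directly from the hypotheses of the setting. A precoloring of at most $\Delta(G)$ edges uses at most $\Delta(G)$ distinct colors, since each precolored edge receives exactly one color. The palette for $G \square K_2$ is $\{1,\dots,\chi'(G)+1\}$. By Vizing's Theorem, $\chi'(G)\ge \Delta(G)$, so the palette has at least $\Delta(G)+1$ colors. Hence the set of colors appearing on precolored edges is a proper subset of the palette, and some color $c$ is unused.

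The second part is the normalization: we may assume this unused color is $\chi'(G)+1$. For this I apply the transposition $\pi$ of the palette that swaps $c$ and $\chi'(G)+1$, leaving every other color fixed, to the precoloring. Because $\pi$ is a bijection on colors, it preserves properness. If $\varphi$ is an extension of the permuted precoloring, then $\pi^{-1}\circ\varphi$ is a proper $(\chi'(G)+1)$-edge-coloring of $G\square K_2$ that agrees with the original precoloring. So extendability is invariant under renaming colors, and we may rename so that $\chi'(G)+1$ is absent from the precoloring.

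There is no real obstacle here; the only point needing care is the inequality $\chi'(G)+1>\Delta(G)$. This holds by Vizing's lower bound $\chi'(G)\ge\Delta(G)$, which is immediate because the edges at a vertex of maximum degree need distinct colors. Note also that in the intended applications the precoloring has size $k+1\le\Delta(G)$, since $k<\Delta(G)$, so the hypothesis of the observation is met.
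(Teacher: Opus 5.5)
Your proposal is correct and follows the same reasoning as the paper. The paper uses a pigeonhole count (at most $\Delta(G)$ colors appear on the precolored edges, while the palette has $\chi'(G)+1\ge\Delta(G)+1$ colors) and then renames colors implicitly, which you make explicit with a transposition of the palette; the bound $\chi'(G)\ge\Delta(G)$ is the trivial one rather than the substance of Vizing's theorem, but your direct justification at a maximum-degree vertex already covers it.
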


\begin{nota}
    \textup{For each vertex $v \in V(H)$, let $G_v$ be the subgraph of $G \square H$ induced by the vertices $\{(u,v) \mid u \in G\}$ (this is isomorphic to $G$) and for each edge $e=uv \in E(H)$, let $M_e$ be the set of edges $\{ (x,u)(x,v) \mid x \in G\}$.}
\end{nota}

\begin{theorem} \label{edge}
    If $G$ is a triangle-free, $r$-regular graph where every precoloring of at most $k < r$ edges can be extended to a proper $\chi'(G)$-edge-coloring, then every precoloring of at most $k+1$ edges of $G \square K_2$ is extendable to a proper $(\chi'(G)+1)$-edge-coloring of $G \square K_2$.
\end{theorem}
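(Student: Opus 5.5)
We follow the structure of $G\square K_2$: it consists of the two copies $G_1,G_2$ of $G$ and the perfect matching $M=M_e$ joining corresponding vertices. Let $P$ be the precolored edge set, with $|P|\le k+1\le r$. By the Observation, some color, say $\chi'(G)+1$, does not appear on $P$. The basic strategy is as follows. Extend the precoloring restricted to $G_1$ to a proper $\chi'(G)$-coloring $c_1$ of $G_1$ with colors $\{1,\dots,\chi'(G)\}$, and similarly obtain $c_2$ on $G_2$. Then color every edge of $M$ with color $\chi'(G)+1$. This works whenever $|P\cap E(G_i)|\le k$ for $i=1,2$ and $P\cap M=\emptyset$. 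So the real work is handling precolored matching edges, and the case where all $k+1$ precolored edges lie in one copy.

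\textbf{Case A: $P\subseteq E(G_1)$ and $|P|=k+1$.} Pick a precolored edge $f=xy\in P$ and extend $P\setminus\{f\}$ to a $\chi'(G)$-coloring of $G_1$. Then use the spare color $\chi'(G)+1$ to repair $f$ and its conflicts. A cleaner alternative is the following. Extend $P\setminus\{f\}$ in $G_1$, and recolor $f$ with its prescribed color $\alpha$. This creates at most two conflicting edges, at $x$ and at $y$, both of color $\alpha$. Because $G$ is triangle-free, these two conflicting edges $xx'$ and $yy'$ are distinct and nonadjacent. Recolor them with $\chi'(G)+1$; this is proper provided they are not precolored. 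If one of them is precolored, choose a different $f$ instead; since $k+1\le r$, such a choice should exist. In $G_2$, take any $\chi'(G)$-coloring. Each $M$-edge $(v,1)(v,2)$ now needs a color missing at both endpoints. At the vertices other than $x',y'$ and their partners, only the $M$-edges use color $\chi'(G)+1$, so those $M$-edges can get that color. For $x'$ and $y'$, where color $\chi'(G)+1$ is already used, we use regularity: $x'$ misses $\alpha$ in $G_1$. We therefore permute the colors of the coloring of $G_2$ so that $(x',2)$ and $(y',2)$ also miss $\alpha$, or more carefully choose $c_2$ as a recoloring of $c_1$ by a permutation. Then $\alpha$ can serve on those two $M$-edges. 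Taking $c_2$ to be the copy of the final coloring of $G_1$ with colors $\alpha$ and $\chi'(G)+1$ swapped, followed by a check of the $M$-edges, is the concrete choice I would try first.

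\textbf{Case B: some edges of $M$ are precolored.} Let $P_M=P\cap M$ and $P_i=P\cap E(G_i)$. Since $|P_1|,|P_2|\le k$, extend each to colorings $c_1,c_2$. The trouble is that a precolored $M$-edge $(v,1)(v,2)$ of color $\beta$ needs $\beta$ missing at both endpoints in the $G_i$. Since $G$ is $r$-regular with $r\le\chi'(G)$, for $r=\chi'(G)$ every color appears at every vertex, so we must move color $\beta$ away from $v$. The approach is to swap colors $\beta$ and $\chi'(G)+1$ on a suitable set of edges. Concretely, at $(v,i)$ the edge colored $\beta$ is recolored to $\chi'(G)+1$; this is allowed if that edge is not precolored and no other edge recolored this way is adjacent to it. Triangle-freeness together with $|P|\le r$ is used to guarantee that the recolored edges form a matching avoiding $P$. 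Since we may choose different extensions, we would first fix an extension in which the $\beta$-edges at the relevant vertices avoid $P$; Kempe-chain swaps of two colors not in $P$ can help here. Afterwards, every unprecolored $M$-edge whose endpoints both miss $\chi'(G)+1$ gets color $\chi'(G)+1$. The remaining $M$-edges, namely those next to recolored edges, need a common missing color, and the natural candidate is $\beta$.

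\textbf{Main obstacle.} The last step is the hardest. After the local swaps, every $M$-edge must still find a color missing at both ends, and the requirement for precolored $M$-edges sitting at adjacent or nearby vertices may clash. I expect to handle this by a counting argument on the at most $r$ precolored edges, combined with triangle-freeness, which prevents two neighbors of a vertex from being adjacent and thus keeps the recolorings independent.
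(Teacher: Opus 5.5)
There is a genuine gap in Case B, which is the heart of the theorem. You flag it as the ``main obstacle'' but do not resolve it, and the recoloring you sketch fails when $c_1,c_2$ are chosen independently. Suppose the $\beta$-edge at $(v,1)$ is $(v,1)(w,1)$ but the $\beta$-edge at $(v,2)$ is $(v,2)(w',2)$ with $w\neq w'$, and $r=\chi'(G)$. After both are recolored with the spare color, $(w,1)$ misses only $\beta$ while $(w,2)$ misses only the spare color, so the vertical edge at $w$ cannot be colored. This already happens in $C_4\square K_2$ with one precolored vertical edge, for suitable $c_1,c_2$.

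The missing idea is to force the two copies to agree before extending. For a precolored vertical edge $(x,1)(x,2)$ of color $\beta$, pick a neighbour $y$ of $x$ such that neither $(y,1)$ nor $(y,2)$ lies on a precolored edge, and prescribe $\beta$ on both $(x,1)(y,1)$ and $(x,2)(y,2)$. Such a $y$ exists: $x$ has $r$ neighbours, there are at most $k\le r-1$ other precolored edges (a vertical edge and its projected pair count once), and by triangle-freeness each of them blocks at most one neighbour of $x$. This step, not Case A, is where triangle-freeness enters. Then extend both copies with colors $1,\dots,\chi'(G)$, give every vertical edge the spare color, and swap the two colors on each $4$-cycle $(x,1)(y,1)(y,2)(x,2)$. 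Choosing the $y$'s one at a time keeps these $4$-cycles disjoint.

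You also miss a counting consequence. After these additions $G_i$ carries $|P_i|+|P\cap M|$ prescriptions, which equals $k+1$ when $|P|=k+1$ and $P_1$ or $P_2$ is empty, so that subcase needs its own argument. The paper handles it as follows: it makes the spare color one of the colors on vertical edges, does not project those edges, and repairs edges of $G_1$ prescribed with that color by extra square swaps. Check this carefully. With $k=3$, let $(x,1)(x,2)$ and $(z,1)(w,1)$ be prescribed $c'$, and $(z,1)(z,2)$ and $(x,1)(y,1)$ be prescribed $c''$. Then for either choice of spare color, two repair squares share a vertical edge.

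In Case A your idea is the paper's: drop $f$, extend, reimpose $\alpha$, and push the conflicts to the spare color. The finish, however, is wrong as written. The conflicting edges can never be precolored, since they carry $\alpha$ and meet $f$, so no re-choice of $f$ is needed; their non-adjacency also does not use triangle-freeness. Your treatment of the vertical edges handles $x',y'$ but not $x,y$, which also see the spare color.

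Most importantly, your concrete $c_2$ (the copy with $\alpha$ and $\chi'(G)+1$ swapped) fails when $r=\chi'(G)$. Then $(x',1)$ misses only $\alpha$ while $(x',2)$ misses only the spare color, and if no recoloring was needed, every vertex fails. The right choice is the plain copy of the final coloring of $G_1$. Both ends of each vertical edge then see the same $r$ colors, leaving at least $\chi'(G)+1-r\ge 1$ common free color. Since the vertical edges form a matching, they can be colored independently.
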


\begin{proof}
    Let $V(K_2)=\{a,b\}$. If every precolored edge is neither in $G_a$ or in $G_b$ and there are precolored edges in both $G_a$ and $G_b$ (and the color $\chi'(G)+1$ is not prescribed anywhere), we color $G_a$ and $G_b$ respecting the prescriptions using colors $\{1, \dots, \chi'(G)\}$. (This is possible by the hypothesis.) Then, color the edges running between $G_a$ and $G_b$ with the color $\chi'(G)+1$.

    If all precolored edges are in $G_a$ (or symmetrically, all are in $G_b$), we ignore the precoloring on an arbitrary precolored edge $e$ and color $G_a$ with colors $\{1, \dots, \chi'(G)\}$ such that the prescriptions on the remaining precolored edges are satisfied. (This is possible by the hypothesis since there are now at most $k$ constraints). After this, we recolor $e$ with its originally prescribed color. If there are edges adjacent to $e$ that currently have this color, we change their color to $\chi'(G)+1$. With this step, we only modify the colors of non-precolored edges, because no two adjacent edges can be precolored with the same color. We then copy the coloring of $G_a$ to $G_b$. Since $r$ edges of different colors are incident to every vertex $x$, there is still a free color available to color the edges of type $(x,a)(x,b)$.

    If an edge of type $(x,a)(x,b)$ is precolored, we choose a vertex $y$ such that $xy \in E(G)$ and neither $(y,a)$ nor $(y,b)$ is incident to any precolored edge. Such a vertex exists because $x$ has $r$ neighbors, while at most $r-1$ other edges are precolored, and (due to triangle-freeness) every precolored edge has at most one endpoint adjacent to $x$. We then precolor the edges $(x,a)(y,a)$ and $(x,b)(y,b)$ with the color originally assigned to $(x,a)(x,b)$. We can do this sequentially for every precolored edge of type $(x,a)(x,b)$. (Technically, the number of precolored edges increases, but since these induce identical prescriptions in the two copies of $G$, this does not cause any problem.) Finally, prescriptions remain only within $G_a$ and $G_b$. After performing the above steps, two cases may occur again.

    If there are at most $k$ prescriptions in both copies of $G$, we color both copies respecting the precoloring using colors $\{1, \dots, \chi'(G)\}$, and color the edges running between the two copies of $G$ with the color $\chi'(G)+1$. If an edge $(x,a)(x,b)$ had a prescription, the edges of the square $(x,a)(x,b)(y,a)(y,b)$ are now colored with two colors. We can swap the colors on the edges of this square (where $y$ is the vertex chosen for $x$ in the previous step). Since these squares are disjoint (due to the careful choice of the $y$-vertices), these swaps do not spoil the coloring, resulting in a proper coloring that is consistent with the original precoloring.

    Finally, if in one copy of $G$ (say $G_a$) the colors of $k+1$ edges are prescribed after projecting the precolored cross-edges, then $G_b$ only contains precoloring that arose during the projection (meaning it is also prescribed in $G_a$). In this case, let the color $\chi'(G)+1$ be a color that is prescribed on some edge of type $(x,a)(x,b)$. Do not project the edges precolored with $\chi'(G)+1$ onto $G_a$, and if there is an edge in $G_a$ precolored with $\chi'(G)+1$, ignore that prescription. Thus, at most $k$ edges will be precolored in $G_a$, meaning we can color it properly. Copy this coloring to $G_b$, then perform the color swaps on the squares as in the previous paragraph (if necessary). Furthermore, if an edge $(x,a)(y,a) \in G_a$ was precolored with $\chi'(G)+1$, we also swap the colors on the edges of the square $(x,a)(x,b)(y,a)(y,b)$. (These swaps still occur on disjoint squares.) Thus, in every case, we obtain a coloring of $G \square K_2$ that satisfies the precoloring conditions.
\end{proof}

\begin{cor} \label{star}
    If $G$ is a triangle-free, $r$-regular graph where every precoloring of at most $k < r$ edges can be extended to a proper $\chi'(G)$-edge-coloring, then every precoloring of at most $k+n$ edges of $G \square K_{1,n}$ is extendable to a proper $(\chi'(G)+n)$-edge-coloring of $G \square K_{1,n}$.
\end{cor}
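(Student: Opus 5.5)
We argue by induction on $n$, applying Theorem \ref{edge} once per step. The case $n=1$ is exactly Theorem \ref{edge}, since $K_{1,1}=K_2$. For the inductive step the natural move is to view $G \square K_{1,n}$ as $(G \square K_{1,n-1}) \square K_2$. That is false, however: $K_{1,n}$ is not $K_{1,n-1}\square K_2$. So the induction has to add one leaf at a time instead. Let $c$ be the center of the star and $\ell_1,\dots,\ell_n$ its leaves. Then $G\square K_{1,n}$ consists of $G \square K_{1,n-1}$ (on $c,\ell_1,\dots,\ell_{n-1}$), the copy $G_{\ell_n}$, and the matching $M_{c\ell_n}$ joining $G_c$ to $G_{\ell_n}$.

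The key fact is that $G\square K_{1,n}$ decomposes into the $n$ subgraphs $G_c\cup M_{c\ell_i}\cup G_{\ell_i}\cong G\square K_2$, for $i=1,\dots,n$, except that $G_c$ is shared by all of them. We use this directly rather than through induction. Distribute the at most $k+n$ precolored edges as follows. Precolored edges in $M_{c\ell_i}$ or $G_{\ell_i}$ belong to the $i$-th block, and precolored edges in $G_c$ are shared. There are $n+1$ parts and at most $k+n$ edges, so by averaging we aim for a block that carries at most $k+1$ prescriptions. We then color so that all blocks agree on $G_c$, using a reserved palette for each matching.

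Concretely, let $C=\{1,\dots,\chi'(G)\}$ be the main palette. The plan is to give every $M_{c\ell_i}$ a private extra color $\chi'(G)+i$, and to color every copy $G_v$ with $C$, using the same coloring $\varphi$ on all copies. This is proper, since at each vertex $(x,c)$ the $n$ matching edges receive distinct extra colors. Precolored matching edges with colors in $C$ are handled by the square-swap trick from the proof of Theorem \ref{edge}. For a precolored cross-edge $(x,c)(x,\ell_i)$ we choose a neighbor $y$ of $x$ in $G$ such that no vertex $(y,\cdot)$ is incident to a precolored edge. By triangle-freeness and $r$-regularity, $x$ has $r$ neighbors and each precolored edge blocks at most one of them. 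With $k+n$ precolored edges this may fail, so we do the choice per block, since only the $G_c$, $G_{\ell_i}$ layers matter for a swap in block $i$. We then prescribe the color on $(x,c)(y,c)$ and $(x,\ell_i)(y,\ell_i)$, and swap around that square at the end. The extra colors $\chi'(G)+i$ may also appear in the precoloring. We handle this by permuting the extra colors among the leaves wherever possible, and otherwise by ignoring one prescription and recoloring it as in the second case of Theorem \ref{edge}.

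The main obstacle is the count. The projections all land in $G_c$ and in a single $\varphi$, so $\varphi$ may have to satisfy more than $k$ prescriptions. We therefore relax the requirement that all leaf copies use $\varphi$. Each $G_{\ell_i}$ needs only a coloring $\varphi_i$ for which $\varphi_i(x)$-incidences are compatible with the matching colors. Since $M_{c\ell_i}$ uses a color absent from all copies, any $\varphi_i$ extending the prescriptions in $G_{\ell_i}$ works, apart from the squares used for swaps. First we color $G_c$ via the hypothesis with at most $k$ prescriptions, charging the excess to one block and reducing via Theorem \ref{edge}. Then each leaf is handled independently with its own $\le k$ prescriptions plus the copied square edges. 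The step I expect to be hardest is making this bookkeeping fit within $k$ prescriptions per copy when many cross-edges of one block are precolored. In that case I would use the $k+1$ situation of the last paragraph of Theorem \ref{edge}'s proof, applied to the single overloaded block.
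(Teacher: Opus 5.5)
\textbf{There is a genuine gap.} Your proposal is not yet a proof. You identify the obstacle in your last paragraph but never resolve it, and the remedies you sketch do not work.

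\emph{The shared copy $G_c$ is overloaded.} In your scheme, a prescribed cross-edge $(x,c)(x,\ell_i)$ with color $\alpha\in C$ is realized by a square swap. That swap requires the coloring of the \emph{shared} copy $G_c$ to put $\alpha$ on $(x,c)(y,c)$. So the projections from all $n$ blocks land on a single coloring of $G_c$. For a concrete instance, take $k$ prescriptions inside $G_c$ and one prescribed cross-edge, colored from $C$, in each $M_{c\ell_i}$. Every block $G_c\cup M_{c\ell_i}\cup G_{\ell_i}$ then carries only $k+1$ prescriptions. There is no overloaded block for the last case of Theorem~\ref{edge}'s proof, so ``charging the excess to one block'' is impossible. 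Yet $G_c$ must honor $k+n>k$ prescriptions, which the hypothesis on $G$ does not provide.

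\emph{The choice of $y$ also breaks.} The counting in Theorem~\ref{edge} works because at most $r-1$ other prescriptions compete for the $r$ neighbors of $x$. Here up to $k+n-1$ prescriptions, possibly more than $r-1$, touch the common layer $G_c$, so choosing $y$ ``per block'' does not help.

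\emph{The private extra colors do not fit a general precoloring.} You keep all copies inside $C$ and reserve $\chi'(G)+i$ for each $M_{c\ell_i}$. But only one color of the palette is guaranteed to be unused by the precoloring. The ``ignore one prescription and recolor'' trick repairs a single offending edge, not several.

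\textbf{The missing idea is the one you discarded at the start.} It is true that $K_{1,n}$ is not $K_{1,n-1}\square K_2$. However, $K_{1,n}$ is a subgraph of $Q_n=K_2^n$ (a vertex together with its $n$ neighbors). Hence $G\square K_{1,n}$ is a subgraph of $G\square Q_n=(G\square Q_{n-1})\square K_2$, which has exactly the product structure you wanted.

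Now $G\square K_2$ is again triangle-free and $(r+1)$-regular, and the budget $k+1$ stays below the degree $r+1$. So Theorem~\ref{edge} can be iterated $n$ times: every precoloring of at most $k+n$ edges of $G\square Q_n$ extends to a $(\chi'(G)+n)$-edge-coloring.

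One technical point: the later steps use Theorem~\ref{edge} with a palette of $\chi'(G)+j$ colors in place of $\chi'(G\square Q_j)$, which is smaller when $G$ is Class~2. This is harmless, since the proof of Theorem~\ref{edge} goes through verbatim for any palette of size at least the degree.

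Finally, a precoloring of at most $k+n$ edges of $G\square K_{1,n}$ is also one of $G\square Q_n$. Extend it there and restrict to $G\square K_{1,n}$. This is the paper's proof, and it avoids all of the shared-layer bookkeeping.
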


\begin{proof}
    If $G$ is triangle-free and regular, then so is $G \square K_2$. Consequently, by repeatedly applying Theorem \ref{edge}, we obtain that any precoloring of at most $k+n$ edges of $G \square Q_n$ is extendable to a proper $(\chi'(G)+n)$-edge-coloring, where $Q_n$ denotes the $n$-dimensional hypercube. Since $G \square K_{1,n}$ is a subgraph of $G \square Q_n$, the assertion holds for this graph as well.
\end{proof}


\begin{lemma} \label{extra-color}
    Suppose $G$ is a graph such that any precoloring of a set of $k$ edges, is extendable to a proper $l$-edge-coloring of $G$. Then, for any $m \ge 0$, any precoloring of $k+m$ edges of $G$ is extendable to a proper $(l+m)$-edge-coloring of $G$.
\end{lemma}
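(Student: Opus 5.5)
Induction on $m$ suffices, so the plan is to prove the case $m=1$ and iterate: if precolorings of $k+m$ edges extend to $(l+m)$-colorings, then applying the $m=1$ case with $k+m$ in place of $k$ and $l+m$ in place of $l$ gives the step to $m+1$. The hypothesis is read as ``every precoloring of at most $k$ edges with colors from $\{1,\dots,l\}$ extends to a proper $l$-edge-coloring''. Precolorings that use colors outside $\{1,\dots,l\}$ are handled by first permuting the color names. This is harmless because extendability is invariant under renaming colors. The case $m=0$ is trivial.

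For the step $m=1$, take a precoloring of a set $E_0$ with $|E_0|\le k+1$ using colors from $\{1,\dots,l+1\}$. The first case is when some color of the palette is unused on $E_0$. Rename colors so that the unused color is $l+1$. Then pick an arbitrary precolored edge $e\in E_0$ with prescribed color $c$ and drop its prescription. The remaining at most $k$ prescribed edges use colors in $\{1,\dots,l\}$, so by hypothesis they extend to a proper $l$-edge-coloring $\varphi$. Now recolor $e$ with $c$. Any edge adjacent to $e$ that has color $c$ under $\varphi$ is recolored to $l+1$. There are at most two such edges, one at each endpoint of $e$, and they are not adjacent to each other. This is automatic when $G$ is triangle-free. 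In general, the two edges are $ux$ and $vy$ for $e=uv$, and they could share the vertex $x=y$; but then both would have color $c$ at $x$, contradicting properness of $\varphi$. None of these edges is in $E_0$, because two adjacent precolored edges cannot both have color $c$. Since no other edge uses $l+1$, the result is a proper $(l+1)$-coloring that respects all prescriptions. This is the same trick used in the proof of Theorem~\ref{edge}.

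The remaining case is when all $l+1$ colors appear on $E_0$. That forces $l+1\le k+1$, i.e.\ $l\le k$. This case is the main obstacle. I would first try to reduce it to the previous case by recoloring. Choose a color class $C\subseteq E_0$ of prescribed color $l+1$; it is a matching. Drop all of $C$, which leaves at most $k+1-|C|\le k$ prescriptions, all within $\{1,\dots,l\}$. Extend these to a proper $l$-coloring $\varphi$ by hypothesis. Then put color $l+1$ on $C$, and for each edge of $C$ move the conflicting neighbours of color $c'$ to $l+1$.

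The difficulty is that $C$ may contain several edges, so the recolored neighbours may now clash with one another or with $C$. Here I would fall back on Kempe-chain swaps. Alternatively, I would observe that when $l\le k$ the hypothesis is very strong: a path of two edges precolored with the same color cannot be extended. Such a pair is not a proper precoloring, but these degenerate examples suggest the hypothesis forces $k<l$ in the intended setting, so this case essentially doesn't arise. Concretely, I would argue that the intended reading has $k<\chi'(G)\le l$ and restrict to that case, where the pigeonhole count in the first case applies whenever $|E_0|\le k+1\le l$, guaranteeing an unused color.
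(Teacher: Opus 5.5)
Your first case is the paper's argument: drop one prescription, extend with one color fewer, put $c$ back on $e$ and move its neighbors of color $c$ to the spare color. Reducing to $m=1$ and iterating is the same induction. The gap is that you never finish the second case. After a correct start you add a recoloring step, declare a difficulty, and then restrict to $k<\chi'(G)\le l$ on the grounds that the hypothesis forces $k<l$. That claim is false, and the restriction changes the statement. A matching with two edges and $l=1$ satisfies the hypothesis for every $k$. For $k\ge 1$ and $m=1$, precoloring its two edges with $1$ and $2$ lands exactly in your second case. So, as written, the proposal proves the lemma only when $k<l$, although that does cover every application in the paper.

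The missing observation is already in your own construction, and you stop one sentence too late. Once the prescriptions on $E_0\setminus C$ are extended to a proper coloring $\varphi$ with colors $\{1,\dots,l\}$, the color $l+1$ occurs nowhere, and $C$ is a matching. Giving every edge of $C$ the color $l+1$ therefore already yields a proper $(l+1)$-coloring that respects all prescriptions. There are no conflicting neighbors to move; moving neighbors to $l+1$, as you describe, would in fact clash with $C$. No Kempe chains are needed either.

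This color-class argument works for every nonempty precoloring: choose any used color $c^*$, delete its class, extend with the remaining $l+m$ colors by induction, and put $c^*$ back. The empty precoloring is trivial. So this argument can replace your first case entirely.

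It is also more robust than the paper's proof. The paper needs a color unused on $L\setminus\{e\}$, which tacitly assumes $k+m<l+m+1$, i.e.\ $k\le l$. Removing a whole color class requires no relation between $k$ and $l$.
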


\begin{proof}
    We prove the lemma by induction on $m$. The statement for $m=0$ is equivalent to the hypothesis (any precoloring of $k$ edges is extendable to a proper $l$-edge-coloring), so the statement holds.

    Assume that the statement holds for $m$. We now prove that the statement holds for $m+1$. Let $\varphi$ be a precoloring on a subset $L$ of the edges of $G$ such that $|L|=k+(m+1)$. The prescribed colors are taken from the set $C = \{1, \dots, l+m+1\}$.

    Choose an arbitrary edge $e \in L$, and let $c^* = \varphi(e)$ be its prescribed color. Let $L' = L \setminus \{e\}$ and let $\varphi'$ be the restriction of the precoloring $\varphi$ to $L'$. Since $|L'| = k+m$, there must be at least one color $c \in C$ that is not used by $\varphi'$. Let $C' = C \setminus \{c\}$. (The set $C'$ has $|C'| = \chi'(G)+m$ colors.)

    By the induction hypothesis, the precoloring $\varphi'$ on $L'$ is extendable to a proper edge-coloring using the colors in $C'$. If the resulting coloring colors $e$ with $c^*$, then the extension is successful and we are done. If $e$ is not colored $c^*$, then change the color of $e$ to $c^*$ and if any neighbor $e'$ of $e$ is colored $c^*$, change the color of $e'$ to $c$.

    This operation maintains the property of a proper coloring. The edge $e$ is properly colored since no neighbor of $e$ remains colored $c^*$. The edges that were newly colored $c$ cannot be adjacent to each other, because they were all (both) colored $c^*$ in the previous proper coloring (and therefore cannot be adjacent). The resulting coloring respects the precoloring $\varphi$ on $L$, because the color of $e$ has been corrected to $c^*$. Furthermore, the other edges whose colors were modified (from $c^*$ to $c$) could not have been precolored, since $\varphi(e)=c^*$, and a proper precoloring cannot assign $c^*$ to any neighbor of $e$. 

    Thus, we have successfully extended the precoloring $\varphi$ on $k+(m+1)$ edges using $l+m+1$ colors. By induction, the statement holds for all $m \ge 0$.
\end{proof}

\begin{theorem} \label{cycle}
    If $G$ is a triangle-free, $r$-regular graph such that any precoloring of at most $k < r$ edges can be extended to a proper $\chi'(G)$-edge-coloring, then any precoloring of at most $k+2$ edges of the graph $G \square C_{2m}$ $(m>1)$ is extendable to a proper $(\chi'(G)+2)$-edge-coloring of $G \square C_{2m}$.
\end{theorem}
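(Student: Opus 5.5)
We plan to reduce to Theorem~\ref{edge} and Lemma~\ref{extra-color} by viewing $C_{2m}$ as a union of two perfect matchings. Label the cycle vertices $v_1,\dots,v_{2m}$ and let $N_1=\{v_1v_2,v_3v_4,\dots\}$ and $N_2=\{v_2v_3,\dots,v_{2m}v_1\}$. Then $G\square C_{2m}$ is the union of the $m$ copies of $G\square K_2$ spanned by the pairs in $N_1$ and the cross-edge sets $M_e$, $e\in N_2$. The plan is to reserve one color $c_2$ for all edges in $\bigcup_{e\in N_2}M_e$ wherever possible. We then color each copy of $G\square K_2$ (one per $N_1$-pair) with the remaining $\chi'(G)+1$ colors, using Theorem~\ref{edge}. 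Since the sets $M_e$ with $e\in N_2$ form a perfect matching of $G\square C_{2m}$ and the copies are vertex-disjoint, the result is proper as soon as no copy uses $c_2$ and all $N_2$-cross edges receive $c_2$. Because the cycle is even we may rotate, i.e.\ swap the roles of $N_1$ and $N_2$, and we will choose the rotation and $c_2$ to suit the precoloring.

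The first step is counting. At most $k+2\le r+1$ edges are precolored, while there are $\chi'(G)+2\ge r+2$ colors, so some color $c_2$ is unused. Precolored cross edges lying in $N_2$-layers carry colors different from $c_2$, and they obstruct the reservation. If every precolored edge lies in a $G_v$ or in some $M_e$ with $e\in N_1$ (after a rotation, if that helps), then each $N_1$-copy receives at most $k+2$ prescriptions. If each copy has at most $k+1$, Theorem~\ref{edge} applies directly. If a single copy holds all $k+2$, we apply Theorem~\ref{edge} together with Lemma~\ref{extra-color} with $m=1$: the statement with $k+1$ edges and $\chi'(G)+1$ colors upgrades to $k+2$ edges and $\chi'(G)+2$ colors on that copy. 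In that case we color the copy with all $\chi'(G)+2$ colors, and it must avoid $c_2$. To handle this, we choose $c_2$ unused and, in the recoloring step of Lemma~\ref{extra-color}, take the spare color $c$ different from $c_2$. This works because $k+1$ prescriptions leave at least two unused colors among $\chi'(G)+2$.

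The main obstacle is the case where precolored edges occur in both $N_1$- and $N_2$-type cross layers, so that no rotation places all cross prescriptions on one side. Here I would imitate the square-swapping trick from the proof of Theorem~\ref{edge}. For a precolored edge $(x,v_i)(x,v_{i+1})$ with $v_iv_{i+1}\in N_2$ and color $\alpha$, we pick a neighbor $y$ of $x$ such that the square on $x,y$ in layers $v_i,v_{i+1}$ touches no other precolored edge; triangle-freeness and $k+1<r+1$ guarantee that such a $y$ exists. We then prescribe $\alpha$ on $(x,v_i)(y,v_i)$ and $(x,v_{i+1})(y,v_{i+1})$, build the coloring as above with $c_2$ on that square's cross edges, and finally swap the two colors around the square. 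The delicate point is the count. These projected prescriptions fall into two different $N_1$-copies, one on each side, and each copy must still have at most $k+1$ independent prescriptions, or at most $k+2$ with the Lemma~\ref{extra-color} device. I would check that each original $N_2$ cross prescription adds at most one constraint to each neighboring copy. The total budget $k+2$ then works out provided at least one precolored edge lies outside each copy, and the boundary cases where it does not would be handled by choosing the rotation.
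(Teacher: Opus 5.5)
Your main case is the paper's main case: delete one of the two cross-layer families $M_{P_1},M_{P_2}$, color the resulting copies of $G\square K_2$ from a common palette of $\chi'(G)+1$ colors via Theorem~\ref{edge}, give the deleted cross layers the unused color, and realize deleted cross prescriptions by projecting onto squares and swapping. The gap is in the concentrated configurations. You hand these to a Lemma~\ref{extra-color} device and to rotation, and neither works.

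\emph{The device.} With spare color $c\neq c_2$, Lemma~\ref{extra-color} first colors the copy from $C\setminus\{c\}$, a palette of $\chi'(G)+1$ colors that contains $c_2$. If $\chi'(G)=r$, the copy is $(r+1)$-regular, so $c_2$ occurs at essentially every vertex, and the adjacent $N_2$ cross edges cannot take $c_2$. Choosing $c=c_2$ instead puts $c_2$ on the recolored neighbors of $e$. What you actually need is to extend $k+2$ prescriptions on $G\square K_2$ using only the $\chi'(G)+1$ colors other than $c_2$, and this is false in general. For $G=Q_d$ and $k=d-1$, precolor in $Q_{d+1}$ the $d$ edges at a vertex $u$ other than $uw$ with $1,\dots,d$, and one edge at $w$ not containing $u$ with $d+1$; this cannot be extended with $d+1$ colors.

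\emph{Rotation.} A copy is safe only if some precolored edge lies outside the copy \emph{together with} its two adjacent deleted cross layers, because projected prescriptions count. Suppose all $k+2$ precolored edges lie in $G_{v_i}$, or in $M_{v_{i-1}v_i}\cup G_{v_i}\cup M_{v_iv_{i+1}}$. Then for either choice of deleted family, the copy containing $v_i$ carries $k+2$ effective prescriptions.

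\emph{The free neighbor $y$.} Your existence argument is off by one. With $k+2$ precolored edges in total, $k+1=r$ others can block all $r$ neighbors of $x$. One needs $|L\cap(E(G_u)\cup M_e\cup E(G_v))|\le k+1$ for every deleted $e=uv$, and securing this is what dictates the paper's choice between $M_{P_1}$ and $M_{P_2}$.

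These are the cases for which the paper brings in separate constructions, and they are the ideas you are missing.
\begin{itemize}
\item If all prescriptions lie in one layer $G_{v_i}$, the paper colors that layer with $\chi'(G)+2$ colors by Lemma~\ref{extra-color} and copies this coloring to every layer. It then colors each even cycle $(x,v_1)(x,v_2)\cdots(x,v_{2m})$ with the two colors missing at $x$, so no color is reserved at all.
\item If the prescriptions are concentrated in $M_{v_{i-1}v_i}\cup G_{v_i}\cup M_{v_iv_{i+1}}$, the paper transfers them to $G\square C_4=(G\square K_2)\square K_2$ and extends with $\chi'(G)+2$ colors by applying Theorem~\ref{edge} twice. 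It then unfolds the coloring: $G_{v_{i-1}},G_{v_i},G_{v_{i+1}}$ copy $G_{w_1},G_{w_2},G_{w_3}$, every other layer copies $G_{w_4}$, and the remaining $2m-2$ cross layers alternately copy $M_{w_3w_4}$ and $M_{w_4w_1}$. This is proper because the three edge types at $(x,w_4)$ have distinct colors.
\end{itemize}
Without some such global construction for the concentrated case, your reservation scheme cannot be completed.
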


\begin{proof}
    Let the vertices of $C_{2m}$ be denoted by $v_1, v_2, \dots, v_{2m}$, and its edges by $e_i = v_i v_{i+1}$ (where $v_{2m+1} = v_1$). Let $L$ be the set of precolored edges such that $|L| \le k+2$, and let $\varphi$ be the given precoloring.

    Consider two disjoint perfect matchings of the cycle $C_{2m}$: $P_1 = \{e_1, e_3, \dots, e_{2m-1}\}$ and $P_2 = \{e_2, e_4, \dots, e_{2m}\}$. The corresponding edge sets are $M_{P_1} = \bigcup_{e \in P_1} M_e$ and $M_{P_2} = \bigcup_{e \in P_2} M_e$ in $G \square C_{2m}$. It is clear that after removing either $M_{P_1}$ or $M_{P_2}$, the components of the remaining graph are $G \square P_2$ (or $G \square P_1$), which are unions of disjoint $G \square K_2$ components.

    If there exists a vertex $v_i \in C_{2m}$ such that $|L \cap G_{v_i}|=k+2$, we color $G_{v_i}$ according to the precoloring using $\chi'(G)+2$ colors (this is possible by Lemma \ref{extra-color}). Subsequently, for all $i \neq j$, we color $G_{v_j}$ identical to this coloring. (The subgraphs $G_{v_j}$ are all isomorphic to $G$.) Thus, for every vertex $u \in G$, the same $r$ colors are used, leaving two available colors. The edges of form $(u,v_j)(u,v_{j+1})$ constitute an even cycle, which can be colored using these two remaining colors.

    If there is no vertex $v_i \in C_{2m}$ such that $|L \cap G_{v_i}|=k+2$, we remove one of the sets $M_{P_1}$ or $M_{P_2}$ (denoted $M_{P_i}$) such that for every edge $e=uv \in P_i$, the condition $|L \cap (E(G_u) \cup E(M_e) \cup E(G_v))| \leq k+1$ holds. Such a choice is always possible; if both edges $e \in P_1$ and $f \in P_2$ violated this condition, then (since $M_e \cap M_f = \emptyset$) they would share a common endpoint $v$, and all $k+2$ precolorings would have to lie within $G_v$, which we have already settled.

    For every edge $f_x = ((x,u), (x,v)) \in L \cap M_{P_i}$ (with color $c$), we can find an edge $xy_x \in E(G)$ such that $g_u = ((z,u), (y_x,u)) \notin L$ and $g_v = ((z,v), (y_x,v)) \notin L$ for all $z \in V(G)$. This is guaranteed because for every edge $e=uv$, initially $|L \cap (E(G_u) \cup E(M_e) \cup E(G_v))| \leq k+1 \le r$. Furthermore, since $P_i$ is a matching, when a color prescription is projected into a subgraph $G_x$, its origin from a specific $M_e$ is unique. Although $|L \cap (E(G_u) \cup E(M_e) \cup E(G_v))|$ may increase, the number of constraints does not change effectively if we treat the edges $(x,u)(y,u)$ and $(x,v)(y,v)$ as a single unit (one prescription is replaced by two equivalent ones). Using the facts that $G$ is triangle-free (implying any precolored edge is incident to at most one neighbor of $x$) and $r$-regular, there must exist a neighbor $y_x$ of $x$ such that no precolored edge is incident to it in either $G_u$ or $G_v$. We then precolor the edges $g_u$ and $g_v$ associated with $x$ with color $c$. This process can be repeated iteratively to project all prescriptions such that no vertex is incident to more than one prescription originating from the projection.

    As a result, every component is of the form $G \square K_2$. If every component contains at most $k+1$ precolored edge after the projections, we can color them using $\chi'(G)+1$ colors such that the same colors are used in every component, per Theorem \ref{edge}, since there is a color that is not used in any precoloring. The edges of $M_{P_i}$ are then colored with this unused color. This yields a proper coloring of the graph that satisfies all original precolorings except for those in $M_{P_i}$, while also satisfying the projected prescriptions. The squares defined by the projected prescriptions, $((x,u),(y_x,u),(x,v),(y_x,v))$, are disjoint because $P_i$ (and thus $M_{P_i}$) is a matching and each $y_x$ was chosen to be free of other projected prescriptions. Each such square is colored with two colors. By swapping the colors along these squares, the coloring remains proper, does not violate the precoloring of any other edges, and ensures the color of the precolored edges in $M_{P_i}$ receive their designated colors, providing an extension of $\varphi$.

    Finally, we examine the case where the removal of $M_{P_i}$ leads to a component having $k+2$ requirements (after projecting the removed precolorings). This can only occur if for some $i$, $L \cap (M_{v_{i-1}v_i} \cup G_{v_i} \cup M_{v_iv_{i+1}})=k+2$. (In every other case, we could take the other one, from $M_{P_1}$ and $M_{P_2}$.) In this case, consider the same color prescriptions in the graph $G \square C_4$ by mapping $v_i$ to $w_2$ in $C_4$. ($V(C_4)=\{w_1,w_2,w_3,w_4\}$) This precoloring can be extended by applying Theorem \ref{edge} twice. We then copy this coloring into the graph $G \square C_{2m}$ and complete it according to Table \ref{tab:color_mapping}, which yields a proper extension of $\varphi$ in $G \square C_{2m}$. (The indices are taken modulo $2m$.)

    \begin{table}[h]
        \centering
        \begin{tabular}{|l|l|}
        \hline
        \textbf{Coloring of $G \square C_{2m}$} & \textbf{Based on which part of $G \square C_4$} \\ \hline
        $G_{v_{i-1}}$        & $G_{w_1}$           \\ \hline
        $G_{v_i}$        & $G_{w_2}$           \\ \hline
        $G_{v_{i+1}}$        & $G_{w_3}$           \\ \hline
        $G_{v_j}, j \notin \{i-1,i,i+1\}$        & $G_{w_4}$           \\ \hline
        $M_{v_{i-1}v_i}$        & $M_{w_1w_2}$           \\ \hline
        $M_{v_iv_{i+1}}$        & $M_{w_2w_3}$           \\ \hline
        $M_{v_{i+(2k+1)}v_{i+(2k+2)}}, k=0, \dots m-2$        & $M_{w_3w_4}$           \\ \hline
        $M_{v_{i+(2k+2)}v_{i+(2k+3)}}, k=0, \dots m-2$        & $M_{w_4w_1}$           \\ \hline
        \end{tabular}
        \caption{Mapping of the coloring}
        \label{tab:color_mapping}
    \end{table}
    
\end{proof}

\begin{cor} \label{path}
    If $G$ is a triangle-free, $r$-regular graph where every precoloring of at most $k < r$ edges can be extended to a proper $\chi'(G)$-edge-coloring, then every precoloring of at most $k+2$ edges of $G \square P_{m}$ $(m>1)$ is extendable to a proper $(\chi'(G)+2)$-edge-coloring of $G \square P_{m}$.
\end{cor}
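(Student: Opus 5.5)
The plan is to deduce Corollary \ref{path} from Theorem \ref{cycle} by realizing $G \square P_m$ as a subgraph of $G \square C_{2m'}$ for a suitable even cycle, exactly as Corollary \ref{star} was obtained from a hypercube. Let $P_m$ have vertices $v_1,\dots,v_m$. If $m$ is even and $m \ge 4$, then $P_m$ is a subgraph of $C_m$ (delete the edge $v_mv_1$). If $m$ is odd, then $P_m \subseteq C_{m+1}$, where we add a new vertex adjacent to $v_1$ and $v_m$. In both cases we obtain $P_m \subseteq C_{2m'}$ with $m'>1$, provided $m \ge 3$. Consequently $G \square P_m$ is a (spanning or non-spanning) subgraph of $G \square C_{2m'}$, with the edge sets inherited.

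Given a precoloring $\varphi$ of at most $k+2$ edges of $G \square P_m$ with colors from $\{1,\dots,\chi'(G)+2\}$, view it as a precoloring of the same edges in $G \square C_{2m'}$. It remains proper there, since adjacency between edges of the subgraph is the same in the larger graph. Theorem \ref{cycle} then yields a proper $(\chi'(G)+2)$-edge-coloring of $G \square C_{2m'}$ extending $\varphi$. Restricting this coloring to $G \square P_m$ gives a proper coloring that agrees with $\varphi$ and uses at most $\chi'(G)+2$ colors, which is the required extension.

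The only genuine obstacle is the small case $m=2$, where $P_2=K_2$ does not sit inside an even cycle of length at least $4$ in a way that counts vertices correctly. However, $K_2$ is still a subgraph of $C_4$, and the argument above does not need the embedding to be spanning. So $G \square K_2 \subseteq G \square C_4$, and Theorem \ref{cycle} with $m'=2$ gives the claim directly. Alternatively, we can apply Theorem \ref{edge} followed by Lemma \ref{extra-color}, which converts $k+1$ edges and $\chi'(G)+1$ colors into $k+2$ edges and $\chi'(G)+2$ colors. Either route closes this case, so the corollary follows.
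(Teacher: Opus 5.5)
Your proposal is correct and follows essentially the paper's route: embed $P_m$ in an even cycle, apply Theorem~\ref{cycle} to $G \square C_{2m'}$, and restrict the resulting coloring to $G \square P_m$. The only difference is that the paper uses $C_{2m}$ for every $m$, which always has length at least $4$ when $m>1$. That choice avoids the split into $C_m$ or $C_{m+1}$ and the separate treatment of $m=2$, so those steps of yours are harmless but unnecessary.
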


\begin{proof}
    The graph $P_m$ is a subgraph of $C_{2m}$. According to Theorem \ref{cycle}, any precoloring of at most $k+2$ edges of $G \square C_{2m}$ is extendable to a proper $(\chi'(G)+2)$-edge-coloring. Since $G \square P_m$ is a subgraph of $G \square C_{2m}$, the assertion holds for this graph as well.
\end{proof}

\begin{theorem}
    Let $G$ be an $r$-regular, triangle-free graph and $F$ be a tree. If any precoloring of at most $k < r$ edges of $G$ can be extended to a proper $\chi'(G)$-edge-coloring of $G$, then any precoloring of at most $k+\Delta(F)$ edges of $G \square F$ can be extended to a proper $(\chi'(G)+\Delta(F))$-edge-coloring of $G \square F$.
\end{theorem}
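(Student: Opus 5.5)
The natural approach is the same embedding trick used for Corollary \ref{star} and Corollary \ref{path}: realize $G \square F$ as a subgraph of $G \square Q_{\Delta(F)}$, or of a product we already control, and restrict an extension from the larger graph. Restriction preserves properness and the prescribed colors, and the number of colors is unchanged. So it suffices to find a graph $F' \supseteq F$ for which any precoloring of at most $k+\Delta(F)$ edges of $G \square F'$ extends to a $(\chi'(G)+\Delta(F))$-edge-coloring. Repeated application of Theorem \ref{edge} gives exactly this for $F'=Q_d$ with $d=\Delta(F)$, because $G \square Q_j$ stays triangle-free and $(r+j)$-regular at every step. It also keeps $k+j<r+j$, and since regular triangle-free graphs are not complete graphs of odd order, the hypotheses persist. (Strictly, Theorem \ref{edge} is stated with $\chi'$; I would track that $\chi'(G\square Q_j)=\chi'(G)+j$, which follows because $G\square Q_j$ is $(r+j)$-regular and the coloring produced uses $\chi'(G)+j$ colors, giving $\chi'\le \chi'(G)+j$, while $\chi'\ge r+j$ and the $Q_j$-factor adds at most one each time.)

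The key step is therefore combinatorial: \emph{every tree $F$ with maximum degree $d$ embeds as a subgraph of the hypercube $Q_d$}? This is false in general. For example, long paths have $\Delta=2$ but do not embed in $Q_2=C_4$. So a pure hypercube embedding fails, and I would instead use an edge-coloring decomposition. Since $F$ is bipartite, $F$ has a proper $d$-edge-coloring by König's theorem. I would then build the extension by induction on $d=\Delta(F)$, peeling off one color class at a time as in the proof of Theorem \ref{cycle}.

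Concretely, I would do induction on $\Delta(F)$, with base cases $\Delta(F)=1$ (Theorem \ref{edge}, since $F=K_2$ up to isolated parts) and $\Delta(F)=2$ (Corollary \ref{path}). For the inductive step, choose a matching $P$ of $F$ covering all vertices of maximum degree, so that $F-P$ is a forest with maximum degree $d-1$. Remove $M_P=\bigcup_{e\in P}M_e$ and project each precolored edge of $M_P$ onto a disjoint square. For a precolored edge $(x,u)(x,v)$, pick a neighbor $y$ of $x$ that is free of other prescriptions in $G_u$ and $G_v$; triangle-freeness and $r$-regularity guarantee one, because at most $k+d-1<r+d-1$ other prescriptions compete. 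Then extend the remaining precoloring on each component of $G\square(F-P)$ by the induction hypothesis with $\chi'(G)+d-1$ colors, using a common color palette so that one color, unused in the precoloring, stays free. Color $M_P$ with that free color and swap colors on the projected squares.

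The main obstacle is the counting. As in Theorem \ref{cycle}, a component may inherit too many prescriptions, namely $k+d$, when every prescription sits in the component after projection. I would handle this by choosing $P$ to avoid the heavy vertex, which is possible since a tree has many maximum-degree-covering matchings. If no such choice exists, then all prescriptions lie in one $G_v$ together with its incident $M$-sets. In that case I would reduce to the star $K_{1,d}$ centered at $v$ plus copies: extend on $G\square K_{1,d}$ via Corollary \ref{star}, and propagate the coloring outward along the tree. Each further $G_w$ copies the coloring of its parent leaf-copy, and the $M$-edges reuse colors from the star's edge colors via a proper $d$-edge-coloring of $F$ consistent with the star, in the manner of Table \ref{tab:color_mapping}.
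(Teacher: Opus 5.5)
Your skeleton (a matching $P$ covering the maximum-degree vertices, projecting the prescriptions on $M_P$ onto squares, induction on the components of $G\square(F-P)$, then swapping) is the same as the paper's main case. However, the step that makes the projection possible fails as you argue it. Since $F$ is a tree, the only $4$-cycles through a prescribed edge $(x,u)(x,v)$ are $(x,u)(y,u)(y,v)(x,v)$ with $y\in N_G(x)$. So there are exactly $r$ candidate squares, and you need fewer than $r$ blockers; comparing $k+d-1$ with $r+d-1$ is beside the point.

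Concretely, take $k=r-1$ and $d\ge 3$, and prescribe $(x,u)(x,v)$ together with all $r$ edges of $G_u$ at $(x,u)$. That is $r+1\le k+d$ edges. If $uv\in P$, no square can carry the projection, and your rule for choosing $P$ (aimed only at component counts) does not exclude this.

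The paper gets the needed slack from a case split you are missing. It inducts on $|E(F)|$. If some leaf $\ell$ with neighbour $u$ carries no prescription in $G_\ell\cup M_{u\ell}$, it deletes $\ell$ and extends on $G\square(F-\ell)$, using Lemma~\ref{extra-color} if $\Delta$ drops. It then copies the colouring of $G_u$ onto $G_\ell$ and colours the matching $M_{u\ell}$ greedily. Otherwise each of the at least $\Delta(F)$ leaves carries its own prescription, so at most $k+1$ prescriptions lie in $G_u\cup M_e\cup G_v$ for every $e=uv$. Moreover, taking $P$ to contain an edge not incident to a leaf forces every component of $F-P$ to miss some leaf's prescription. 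This gives at most $k+\Delta(F)-1$ prescriptions per component with no heavy-vertex analysis.

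Your substitute for that last point is only asserted. ``A tree has many maximum-degree-covering matchings'' does not prove the dichotomy: either some $P$ leaves every component light, or all prescriptions lie in one closed star $G_v\cup\bigcup_{e\ni v}M_e$.

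The dichotomy is in fact true for $d\ge 3$. In a forest, Hall's condition together with $|E(S,N(S))|\le |S|+|N(S)|-1$ shows that any single edge of $F$ can be put into a matching covering the degree-$d$ vertices. This lets you separate two prescriptions not lying in a common closed star. Your star-plus-propagation fallback can also be made to work by greedily colouring the $M$-edges outward along the tree. But the same $P$ must also satisfy the projection constraint above, and the proposal never reconciles the two requirements.

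When you redo the projection count, note that a prescription of colour $c$ on a vertical edge $(y,u)(y,\ell)$ also forbids $y$, for instance when $\ell$ is a leaf adjacent to $u$. The paper's bound on $G_u\cup M_e\cup G_v$ alone does not account for such blockers, so this needs care in either approach.
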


\begin{proof}
    We proceed by induction on $|E(F)|$. If $|E(F)| \leq 2$, the assertion holds by Theorem \ref{edge} and Corollary \ref{path}.

    If $F$ is a star graph, the statement holds, due to Corollary \ref{star}; thus, we henceforth assume that $F$ is not a star.

    Assume that the statement holds for every tree $F'$ such that $|E(F')| < m$. Let $F$ be a tree with $|E(F)|=m$, and let $\varphi$ be a precoloring of a set $L$ of $k+\Delta(F)$ edges in the graph $G \square F$. 

    Suppose that there exists a leaf $v \in V(F)$ (with neighbor $u$ and incident edge $e=uv$) such that $L \cap (E(G_v) \cup E(M_e)) = \emptyset$. Let $F' = F-v$. If $\Delta(F') = \Delta(F)$, the assertion follows from the induction hypothesis; if $\Delta(F') = \Delta(F)-1$, it follows from the induction hypothesis combined with Lemma \ref{extra-color} that the precoloring $\varphi$ can be extended to $G \square F'$ using $\chi'(G)+\Delta(F)$ colors. Let us denote this coloring $\overline{\varphi}$.

    We define the coloring of $G_v$ as follows: for every edge $((x,v),(y,v)) \in E(G_v)$, let the color of $(x,v)(y,v)$ be $\overline{\varphi}((x,u)(y,u))$, thereby duplicating the coloring of $G_u$. This yields a proper edge-coloring of $G_v$ and does not conflict with $\varphi$, as no precoloring was specified on $G_v$.

    The coloring of $M_e$ is defined as follows: for an edge $(x,u)(x,v) \in M_e$, at most $\chi'(G) + \Delta(F)-1$ colors are used at vertex $(x,u)$, and no additional colors are used at vertex $(x,v)$ due to the duplicated coloring. As the total palette consists of $\chi'(G)+\Delta(F)$ colors, there is at least one available color for each such edge. Since $\varphi$ is empty on $M_e$ and $M_e$ is a matching, a proper coloring of $M_e$ is obtained by selecting an available color for each edge.

    Now consider the case where for every leaf $v$ of $F$, $L \cap (E(G_v) \cup E(M_e)) \neq \emptyset$. Let $P \subseteq E(F)$ be a matching that covers all vertices of maximum degree in $F$, such that at least one of its edges is not incident to any leaf. Such a matching $P$ exists: it is well known that every bipartite graph possesses a matching covering all vertices of maximum degree. If a matching consisted solely of edges incident to leaves, we could replace an edge incident to a maximum degree vertex with another incident edge not connected to a leaf (which exists since $F$ is not a star). If this remains a matching, we are done; otherwise, by removing the edge adjacent to the newly selected one, we obtain an appropriate matching. (The removed edge’s other endpoint was a leaf, so its removal does not affect the coverage of maximum degree vertices.) Let $M_F = \bigcup_{e \in P} M_e$ be the union of matchings in $G \square F$ corresponding to $P$. Then $F' = F \setminus P$ is a forest with components $F'_j$, where $\Delta(F'_j) \le \Delta(F)-1$.

    In the next step, we project the color prescriptions of $M_F$ onto the trees $F'_j$ as follows:

    For every edge $f_x = (x,u)(x,v) \in L \cap M_F$ (with color $c$), we find an edge $xy_x \in E(G)$ such that $g_u = (z,u)(y_x,u) \notin L$ and $g_v = (z,v)(y_x,v) \notin L$ for all $z \in V(G)$. This is possible because every leaf $v$ satisfies $L \cap (E(G_v) \cup E(M_e)) \neq \emptyset$, these edge sets are disjoint for each leaf, and $F$ has at least $\Delta(F)$ leaves. Thus, for any edge $e=uv$, initially $|L \cap (E(G_u) \cup E(M_e) \cup E(G_v))| \leq k+\Delta(F)-(\Delta(F)-1)=k+1$. Furthermore, since $P$ is a matching, if we project a color prescription into a subgraph $G_x$, its origin from a specific $M_e$ is unique. Although $|L \cap (E(G_u) \cup E(M_e) \cup E(G_v))|$ may increase, if we treat the edges $(x,u)(y,u)$ and $(x,v)(y,v)$ as a single unit, the number of constraints does not change effectively (one color prescription is replaced by two equivalent ones). Utilizing the facts that $G$ is triangle-free (implying any precolored edge is incident to at most one neighbor of $x$), $r$-regular, and $k < r$, there must exist a neighbor $y_x$ of $x$ such that no precolored edge is incident to it in either $G_u$ or $G_v$. We then precolor the edges $g_u$ and $g_v$ associated with $x$ with color $c$. This process can be repeated iteratively to project all color prescriptions such that no vertex is incident to more than one precolored edge originating from the projection.

    Since $P$ contains an edge $e=uv$ not incident to a leaf, the two components formed by the removal of $M_e$ and the projection of its color prescriptions contain at most $k+\Delta(F)-1$ precolored edges in their Cartesian product with $G$. This is because each component formed by removing $e$ contains a leaf, and the precolored edges associated with those leaves (which exist by our current case assumption) will not be projected into the other component. Consequently, after projecting the color prescriptions of $M_F$, each $G \square F'_j$ component contains at most $k+\Delta(F)-1$ precolored edge.

    We apply the induction hypothesis and, if necessary, Lemma \ref{extra-color} to color each component $G \square F'_j$. We use the same $\chi'(G)+\Delta(F)-1$ colors for all components. The edges in $M_F$ are colored with the remaining available colors. (There is a color that we did not precolor any of the edges with.) This results in a proper coloring of the graph that satisfies all original precolorings except for those in $M_F$, while also satisfying the projected color prescriptions. The squares defined by the projected color prescriptions, $((x,u),(y_x,u),(x,v),(y_x,v))$, are disjoint because $P$ (and thus $M_F$) is a matching, and each $y_x$ was chosen to be free of other projected color prescriptions. Each such square is colored using exactly two colors. By swapping the colors on the edges along these squares, the coloring remains proper and satisfies the original precolorings on $L \setminus M_F$. Furthermore, this swap ensures that the precolored edges in $M_F$ receive their designated colors, thus providing the required extension of the precoloring $\varphi$.
\end{proof}

\section{Subcubic graphs}

Casselgren, Petros, and Fufa~\cite{cartesian} noted that any partial coloring of a subcubic Class 2 graph with at most three precolored edges is extendable to a proper 4-edge-coloring. For the sake of completeness, we provide a short proof of this statement for all subcubic graph, and then establish a statement which implies Conjecture~\ref{basic} for subcubic Class 2 graphs.

\begin{theorem} \label{3-subcubic}
    Suppose that at most three edges of a subcubic graph $G$ are properly precolored. Then the precoloring can be extended to a proper 4-edge-coloring of $G$.
\end{theorem}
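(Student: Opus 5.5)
The plan is to reduce to Lemma \ref{extra-color} with a base of $k=0$ precolored edges and a palette of $l=\chi'(G)$ colors. By Vizing's Theorem, a subcubic graph has $\chi'(G)\le 4$. If $\chi'(G)\le 3$ we enlarge the palette to $4$ colors and work with the two parameters separately; if $\chi'(G)=4$ the base case is immediate. So the real content is the Class 1 cubic case with exactly three precolored edges and $4$ colors, which Lemma \ref{extra-color} alone cannot handle, since it would need $3+3=6$ colors.

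The argument I would give is direct, via one precolored edge plus recoloring. Let the precolored edges be $e_1,e_2,e_3$ with colors $c_1,c_2,c_3$ from $\{1,2,3,4\}$. Since only three edges are prescribed, at least one color, say $4$, is absent from $\{c_1,c_2,c_3\}$. I would take a proper $3$-edge-coloring of $G$ with colors $\{1,2,3\}$ if $G$ is Class 1; if $G$ is Class 2, take a $4$-edge-coloring and follow an analogous argument. The aim is to permute colors and then make local Kempe-type changes, using color $4$ as a free color.

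The steps are as follows.
\begin{enumerate}
\item Permute the colors so that $e_1$ receives $c_1$.
\item For $e_2$, if its current color is wrong, recolor $e_2$ with $c_2$ and move every neighbor of $e_2$ colored $c_2$ to color $4$. This is exactly the trick used in the proof of Lemma \ref{extra-color}.
\item This step requires that $e_1$ is not one of those neighbors. It is not, since a neighbor colored $c_2$ cannot be $e_1$: if $e_1$ is adjacent to $e_2$, then $c_1\neq c_2$ by properness.
\item Handling $e_3$ is the main obstacle, since color $4$ is already consumed.
\end{enumerate}

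For $e_3$ I would instead use a counting argument on a smaller structure. Remove $e_1,e_2,e_3$ and treat each remaining edge as a list-coloring problem. Each edge adjacent to precolored edges loses at most the colors prescribed at its endpoints. Then I would argue via a Kempe chain interchange on the $\{c_3,x\}$ subgraph, where $x$ is the current color of $e_3$. Since every vertex has degree at most $3$ and $4$ colors are available, each vertex misses a color, and paths in two-colored subgraphs are what get swapped.

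If a swap would disturb $e_1$ or $e_2$, I would choose an alternative pair of colors or start the chain at the other endpoint. There are only a few configurations, because only two other edges are fixed, and a case analysis on how $e_3$ is adjacent to $e_1$ and $e_2$ (disjoint, one common vertex, or forming a path or star) should close each case. I expect this case analysis of the Kempe swaps to be the hardest part.
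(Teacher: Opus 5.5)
There is a genuine gap; in fact there are two.

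\textbf{The Class 2 case is not handled.} Your reduction puts the difficulty in the wrong place. When $\chi'(G)=4$, only the base case $m=0$ of Lemma~\ref{extra-color} is immediate. The lemma then gives extendability of three precolored edges only with $4+3=7$ colors, so the Class 2 case is not settled at all. ``An analogous argument'' does not work there either. Your Step~2, like the proof of Lemma~\ref{extra-color}, needs a color that appears nowhere in the current coloring, so that the neighbors of $e_2$ colored $c_2$ can be moved to it. A $4$-edge-coloring of a Class 2 graph uses all four colors, so no such color exists, and moving those neighbors to color $4$ can create a conflict. The Class 2 case is exactly the one where this style of argument has no slack.

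\textbf{The third edge is only announced, even for Class 1.} Handling $e_3$ is the whole content of the theorem, and you give no argument for it.
\begin{itemize}
\item The $\{c_3,x\}$-Kempe chain through $e_3$ is a single component containing both endpoints of $e_3$, so ``starting at the other endpoint'' changes nothing.
\item That component can contain $e_1$ or $e_2$ whenever $c_1$ or $c_2$ lies in $\{c_3,x\}$. Swapping it then destroys their prescriptions.
\item After Step~2, color $4$ is no longer free; $e_3$ itself may now carry it.
\item You give no rule for choosing ``an alternative pair of colors,'' and no proof that some sequence of swaps always succeeds.
\item The list-coloring remark is not developed either. An uncolored edge can be adjacent to all three precolored edges, so its list can shrink to a single color, and no general result applies directly.
\end{itemize}

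\textbf{The missing idea.} The paper avoids recoloring altogether and treats Class 1 and Class 2 uniformly, by induction on $|E(G)|$:
\begin{itemize}
\item An uncolored edge at a vertex of degree at most two can be deleted and colored greedily afterwards, since at most three colors are blocked.
\item An uncolored cycle can be deleted and then colored from lists of size at least two. Even cycles are $2$-edge-choosable. An odd cycle fails only if all its lists are the same pair, and that would properly $2$-color its vertices by the colors of the pendant edges, which is impossible.
\item Otherwise the uncolored edges form a forest in which every leaf is incident to two precolored edges. Hence there are at most three leaves, and $G$ is $K_4$ or has at most five edges.
\end{itemize}
To keep a Kempe-chain route, you would need a complete Vizing-fan-type analysis covering every relative position of $e_1,e_2,e_3$, including the Class 2 case with no free color. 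This inductive reduction is what makes that unnecessary.
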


\begin{proof}
    We proceed by induction on the number of edges of $G$. The base case, where $G$ has at most four edges, is trivial, since four colors are available. Assume that the statement holds for all graphs with fewer than $|E(G)|$ edges.

    Suppose that there exists an uncolored edge $e$ incident with a vertex of degree at most two. By the induction hypothesis, $G - e$ admits a proper 4-edge-coloring that extends the precoloring. When $e$ is reinserted, its endpoints are incident with at most three colored edges in total. Thus, at least one of the four colors remains available for $e$. Assigning this color completes the coloring, so we may assume that every uncolored edge is incident only with vertices of degree three. If the number of edges of $G$ is $5$, we are certainly in this case, since at most one edge can have both endpoints of degree three, but there are two uncolored edges. Thus, the statement holds even if $|E(G)| = 5$.

    Suppose $G$ contains a cycle $C$ edge-disjoint from the precolored edges. By removing $C$ and applying the induction hypothesis, the remaining graph admits a proper 4-edge-coloring. Each edge of $C$ then has at least two available colors, as at most one color is forbidden at each endpoint by the edges outside $C$. Since even cycles are 2-edge-choosable, if $C$ is an even cycle, then this coloring extends to $C$.

    Assume that $C$ is an odd cycle. If all edges incident to $C$ but not belonging to it are assigned the same color (say, color $1$), then each edge of $C$ has three available colors excluding $1$. In this case, the edges of $C$ can be colored greedily. Otherwise, these incident edges do not all share the same color. If every edge of $C$ were assigned the same list $\{1, 2\}$, then for any edge $e=uv$ of $C$, the edges incident to $u$ and $v$ outside the cycle would exclude colors $3$ and $4$. Assigning each vertex of $C$ the color of its incident non-cycle edge would then yield a proper 2-coloring of the vertices of $C$, implying that $C$ is even - a contradiction. Hence, the lists on the edges of $C$ are not identical. Since an odd cycle with lists of size at least two that are not all identical is edge-colorable, the coloring extends to $C$.

    Finally, assume that the set of uncolored edges induces a forest $F$. Since each vertex of $F$ in $G$ has degree three, every leaf $v \in F$ is incident to exactly two precolored edges. Given that there are at most three precolored edges and each provides at most two incidences with $V(F)$, the forest $F$ can have at most three leaves.

    As every non-trivial component of a forest contains at least two leaves, $F$ must have exactly one such component, $T$. This component $T$ contains at most one vertex of degree three; otherwise, $T$ would have at least four leaves. Furthermore, if $T$ contains a vertex of degree three, it cannot contain any vertex of degree two in $F$, as such a vertex would require an additional incident precolored edge.

    Consequently, $T$ is either a star $K_{1,3}$ or a path $P_n$ with $n \leq 4$. If $T \cong K_{1,3}$ or $T \cong P_4$, then $G$ is $K_4$. If $T \cong P_4$ where $n < 4$, then $G$ has at most 5 edges; thus, the statement holds in all cases. So, in all cases, the precoloring extends to a proper 4-edge-coloring of $G$.
\end{proof}

\begin{lemma} \label{3-coloring}
    If the vertices of a subcubic graph $G$ can be partitioned into two sets, $U$ and $V$, such that the subgraph induced by $V$, denoted $G[V]$, contains at most one edge and every vertex in $U$ has a degree of at most two in $G$, then $G$ is 3-edge-colorable.
\end{lemma}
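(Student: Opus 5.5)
The plan is to derive the lemma from a classical Class~1 criterion rather than building a coloring by hand. First, the case $\Delta(G)\le 2$ is trivial: every component of $G$ is then a path or a cycle, and every path and every cycle (odd or even) is $3$-edge-colorable. So assume $\Delta(G)=3$. Every vertex of degree three lies in $V$, since vertices of $U$ have degree at most two. Hence the subgraph induced by the vertices of maximum degree is a subgraph of $G[V]$, and so it has at most one edge. In particular, it is a forest.

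By Fournier's theorem (a strengthening of Vizing's adjacency results), a simple graph whose maximum-degree vertices induce a forest is Class~1. Therefore $\chi'(G)=\Delta(G)=3$, which is exactly the claim. This reduction is the main route, and it needs nothing beyond checking the hypothesis of Fournier's theorem.

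If a self-contained argument is preferred, I would take a minimal counterexample $G$ with respect to $|E(G)|$ and remove a suitable edge $e=uv$. The hypothesis of the lemma is inherited by $G-e$ with the same partition, so $G-e$ has a proper $3$-edge-coloring. The edges lying inside $G[V]$ number at most one, so if $G$ has any edge at all, I choose $e$ with $u\in U$ (if every edge lies inside $G[V]$, then $G$ has at most one edge and the claim is trivial). In $G-e$, the vertex $u$ misses at least two colors and $v$ misses at least one, say $\alpha$. If $\alpha$ is missing at $u$, we color $e$ with $\alpha$ and are done. Otherwise $u$ has a single other edge, colored $\alpha$, and misses $\beta,\gamma$, while the two other edges at $v$ are colored $\beta$ and $\gamma$.

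Consider the $\alpha/\beta$ Kempe chain starting at $v$. If it does not end at $u$, we swap it, after which $\beta$ is free at both $u$ and $v$. Symmetrically, if the $\alpha/\gamma$ chain from $v$ does not end at $u$, we swap that chain instead. The remaining case is that both chains run from $v$ to $u$; both then leave $u$ along the same $\alpha$-edge and must separate at some vertex where both a $\beta$-edge and a $\gamma$-edge are present, i.e.\ a degree-$3$ vertex of $V$. This last case is the main obstacle. I would attack it by following the two chains from $u$ up to their first branching vertex $w\in V$, and exploiting that $G[V]$ has at most one edge: this forces the neighbours of $w$ on the chains to lie in $U$ and to have degree at most two. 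That should allow a local recoloring near $w$ (or a second Kempe swap on a chain starting at a neighbour of $w$) which breaks one of the two $v$--$u$ chains. Because this case analysis is delicate, I would rely on Fournier's theorem as the primary proof and treat the Kempe-chain argument only as a fallback.
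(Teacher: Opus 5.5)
Your main argument is correct, but it takes a different route from the paper. You note that every degree-$3$ vertex lies in $V$, so the vertices of maximum degree induce a subgraph of $G[V]$, which is therefore a forest. You then invoke Fournier's theorem, and you correctly handle $\Delta(G)\le 2$ separately, which is necessary because odd cycles are Class~2.

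The paper instead builds one color class by hand. Each degree-$3$ vertex has at least two neighbors in $U$, while vertices of $U$ have degree at most two, so Hall's condition gives a matching covering all degree-$3$ vertices. After swapping the unique edge of $G[V]$ into this matching and greedily adding edges of $G[U]$, the remaining graph has maximum degree $2$ and no odd cycle, so it is $2$-edge-colorable.

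The paper's proof is elementary and self-contained, but it needs the hypothesis at full strength. The Hall count requires two $U$-neighbors at each degree-$3$ vertex, and the parity argument requires that the single edge of $G[V]$ can be put into the matching. Your reduction is two lines and depends on a deeper external theorem. In exchange, it shows that the $U/V$ hypothesis matters only through the core being a forest.

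That extra generality would pay off later in the paper. In Case~2 of Lemma~\ref{2-edge_4-color}, take any maximal matching $M$ of $G-w$ containing $e$. The unmatched vertices then induce a star centered at $w$ plus isolated vertices. So Fournier's theorem makes $G\setminus M$ $3$-edge-colorable directly, and the ensuing case analysis becomes unnecessary.

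Your Kempe-chain fallback, by contrast, is not a proof as written. The case where both chains from $v$ end at $u$ is left open; note that they already split at the $\alpha$-neighbor of $u$. Closing that case essentially means reproving Fournier's theorem. Either drop the fallback or replace it with the Hall-matching argument.
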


\begin{proof}
    Let $B$ be the bipartite subgraph of $G$ containing all edges with one endpoint in $U$ and the other in $V$. If $G[V]$ contains an edge, let it be $e_V = vw$; otherwise, let $e_V = \emptyset$.

    Let $T \subseteq V$ be the set of degree-3 vertices in $V$. Let $B'$ be the subgraph of $B$ induced by $T \cup U$. Every $u \in U$ has a degree at most two in $B$ and so in $B'$ as well. In $G$, every vertex $v \in T$ has degree 3. Since $G[V]$ contains at most one edge, $v$ has at least 2 neighbors in $U$. Therefore, in $B'$, every vertex in $S \subseteq T$ has degree at least 2. The number of edges between $S$ and $N(S)$ in $B'$ is at least $2|S|$. Since every vertex in $N(S) \subseteq U$ has degree at most 2 in $B'$, they can be incident to at most $2|N(S)|$ of these edges. Hence $2|S| \le 2|N(S)|$, which implies $|N(S)| \ge |S|$, so Hall's Condition is satisfied for $T$. Thus, there exists a matching $M_T$ in $B'$ (and so in $B$ as well) that covers every vertex in $T$.

    Construct a new matching $M$ as follows:
    \begin{enumerate}
        \item Start with $M_T$. If $e_V = vw$ exists, remove from $M_T$ the edges incident to $v$ or $w$ (if there are such edges), and add $e_V$ to the set.
        \item Extend this set by greedily adding as many edges from $G[U]$ as possible so that the resulting set $M$ remains a matching.
        \item Color the edges of $M$ with color $1$ and remove them from $G$.
    \end{enumerate}

    In the remaining graph $G' = G \setminus M$, the maximum degree $\Delta(G') \le 2$. Indeed, every $v \in T$ was covered by $M$, so its degree decreased from at most three to at most two. Vertices of $U$ have a degree at most 2 in $G$, and those covered by $M$ have a degree at most 1 in $G'$.

    We show that $G'$ contains no odd cycles. Since $G \setminus \{e_V\}$ is bipartite between $U$ and $V$, except for the edges in $G[U]$, any odd cycle in $G'$ must use at least one edge $e_U = u_1u_2$ from $G[U]$ that was not included in $M$. However, $e_U$ was excluded only if at least one of its endpoints (say $u_1$) was already covered by $M$, which implies that $d_{G'}(u_1) \le 1$. Since a vertex in a cycle must have degree two, $u_1$ (and thus $e_U$) cannot be part of any cycle in $G'$.

    Since $G'$ is a graph with $\Delta(G') \le 2$ and no odd cycles, it is 2-edge-colorable. Combined with the first color used for $M$, we obtain a proper 3-edge-coloring of $G$.
\end{proof}

\begin{lemma} \label{2-edge_4-color}
    Let $G$ be a subcubic graph and $K_2$ have vertices $V(K_2) = \{a, b\}$. Let $e_a = (u, a)(v, a)$ and $f = (w, a)(w, b)$ be two edges in $G \square K_2$ (where $w$ may or may not be in $\{u, v\}$, so $e_a$ and $f$ may or may not be incident). Any proper partial edge-coloring of $\{e, f\}$ can be extended to a proper 4-edge-coloring of the entire graph $G \square K_2$.
\end{lemma}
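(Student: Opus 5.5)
The plan is to build a special structured 4-edge-coloring of $G \square K_2$ in which $G_a$ and $G_b$ receive identical colorings. The precoloring of $\{e_a,f\}$ is then met by permuting the four colors, and by choosing the structure suitably where necessary. Here $e_a$ is the precolored edge of the statement and $e=uv$ is its projection in $G$.

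\textbf{Step 1 (the basic construction).} Let $N$ be a maximal matching of $G$. The vertices not covered by $N$ form an independent set $V$, and every vertex of $U=V(G)\setminus V$ has degree at most two in $G-N$. Hence Lemma \ref{3-coloring} applies to $G-N$ with the partition $U,V$ (with $G[V]$ edgeless), and gives a proper 3-edge-coloring $\psi$ of $G-N$ with colors $\{1,2,3\}$. Color both $G_a$ and $G_b$ by $\psi$, and give the copies of the edges of $N$ color $4$. Color the cross edges as follows.
\begin{itemize}
\item If $x \in V$, give $(x,a)(x,b)$ color $4$. This is allowed because color $4$ does not appear at uncovered vertices.
\item If $x \in U$, then $x$ has at most two $\psi$-colored edges, so some color $m_x\in\{1,2,3\}$ is missing at $x$. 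It is missing at both $(x,a)$ and $(x,b)$, since the copies are identical, so give $(x,a)(x,b)$ color $m_x$.
\end{itemize}
The cross edges form a matching, so the result is a proper 4-edge-coloring of $G\square K_2$.

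\textbf{Step 2 (matching the precoloring).} Let the prescribed colors be $\alpha$ on $e_a$ and $\beta$ on $f$. In the coloring from Step 1, suppose $e_a$ receives $\alpha'$ and $f$ receives $\beta'$. If $\alpha'=\beta'$ exactly when $\alpha=\beta$, a global permutation of the four colors finishes the proof. If $e_a$ and $f$ are incident, then $\alpha\ne\beta$ and $\alpha'\neq\beta'$ automatically, so we are done. So assume $w\notin\{u,v\}$ and control equality through the choice of $N$.
\begin{itemize}
\item If $\alpha=\beta$, take $N$ to be a maximal matching containing $e$ and avoiding $w$, if possible. Then $e_a$ and $f$ both get color $4$. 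If every maximal matching containing $e$ covers $w$, say by an edge $ww'$, then instead adjust $\psi$. At the covered vertex $w$, we want $m_w=\psi(e)$; note that $w$ is not adjacent to $u$ or $v$ via a $\psi$-edge of color $\psi(e)$ if we arrange it. I would obtain this by a Kempe swap on a $\psi(e)$/$m_w$ component of $\psi$, or by the square-swap trick used in the earlier proofs.
\item If $\alpha\ne\beta$, choose $N$ so that $e_a$ and $f$ get distinct colors. For instance, put $e$ in $N$ and ensure $w$ is covered, so that $f$ gets some $m_w\neq 4$. Alternatively, keep $e\notin N$ with $w$ uncovered, so that $f$ gets $4$ and $e_a$ gets a color in $\{1,2,3\}$.
\end{itemize}

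\textbf{Main obstacle.} The hard part is the local control in Step 2. We need a maximal matching $N$ with prescribed behavior at $e$ and $w$ (containing or avoiding $e$, covering or not covering $w$), and this is not always available. A small local neighborhood can force $w$ to be covered, for example when every neighbor of $w$ is needed elsewhere. For these configurations I would fall back on a square swap. Pick a neighbor $y$ of $w$ such that the square $(w,a)(w,b)(y,a)(y,b)$ avoids $e_a$. Arrange that this square is 2-colored, with $(w,a)(y,a)$ and $(w,b)(y,b)$ having the same color. Swapping the two colors on the square then transfers the color of $(w,a)(y,a)$ to $f$, exactly as in the proof of Theorem \ref{edge}. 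Arranging the 2-colored square at $w$ can itself be done by a suitable choice of $N$ containing $wy$, since then the cross edges at $w$ and $y$ both receive missing colors, which can be made equal by a Kempe change in $\psi$. A few degenerate cases remain, namely $w$ adjacent to both $u$ and $v$ in $G$ (a triangle) or $\deg_G(w)\le 1$; I expect to handle these by direct inspection.
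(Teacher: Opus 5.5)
Your Step~1 is correct. Your treatment of $\alpha\neq\beta$ (a maximal matching through $e$ and an edge at $w$ not adjacent to $e$, then Lemma~\ref{3-coloring}) is essentially the paper's Case~1. If every edge at $w$ goes to $u$ or $v$, then $\deg_G(w)\le 2$ and $w$ keeps a free color in $\{1,2,3\}$ even when uncovered, so that corner is harmless.

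The gap is the case $\alpha=\beta$ with $w\notin\{u,v\}$, which is the real content of the lemma. With identical copies, what you need there is a $4$-edge-coloring of $G$ in which the color of $e$ is absent at $w$.
\begin{itemize}
\item Your main route asks for a maximal matching of $G$ through $e$ that misses $w$, i.e.\ a matching of $G-w$ through $e$ covering all of $N(w)$. This fails whenever some neighbor of $w$ has all its other neighbors in $\{u,v\}$.
\item Your fallback does not supply an argument. If $e\in N$, then $e$ is not colored by $\psi$ at all: $e_a$ has color $4$, while a covered $w$ gives $f$ a color $m_w\in\{1,2,3\}$. So the target ``$m_w=\psi(e)$'' is meaningless there. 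If $e\notin N$, the $\psi(e)/m_w$ Kempe chain starting at $w$ may run through $e$ and recolor it.
\item In the square-swap version ($e,wy\in N$) you need $m_w=m_y$. But the $m_w/m_y$ chain starting at $w$ may end at $y$, and then swapping it merely exchanges the two missing colors.
\item After the square swap the two copies are still identical. So this route only restates the requirement (a $4$-coloring of $G$ with the color of $e$ absent at $w$) rather than producing it.
\end{itemize}

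The paper settles this case as follows.
\begin{itemize}
\item It first adds pendant edges so that $w$ and its neighbors have degree $3$.
\item It then exploits the slack in Lemma~\ref{3-coloring}, where $G[V]$ may contain one edge. For a matching $M\ni e$ that is maximal in $G-w$, it therefore suffices that $M$ covers two of the three neighbors of $w$.
\item Such an $M$ is found by a case analysis on $|N(w)\cap\{u,v\}|$, using either an edge inside $N(w)$ or a Hall-type matching into $V(G)\setminus\{u,v,w\}$.
\end{itemize}
Even this fails in some non-degenerate configurations. For instance, suppose two neighbors $x,y$ of $w$ are both adjacent to $u$ and $v$. Then every matching $M$ with $e\in M$ and $w\notin V(M)$ leaves $w$, $x$ and $y$ of degree $3$ in $G-M$. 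Any partition as in Lemma~\ref{3-coloring} must put them in $V$, so $G[V]$ contains both $wx$ and $wy$, and the lemma cannot be used at all, although a suitable coloring does exist. For this and two similar configurations the paper writes down explicit colorings of the local piece (Figure~\ref{fig:end_cases}) and colors the remainder by Vizing's theorem.

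Your list of exceptional cases (a triangle on $u,v,w$, or $\deg_G(w)\le 1$) does not cover these configurations. As it stands, the case $\alpha=\beta$ is not proved.
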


\begin{proof}
    Let $G_a$ and $G_b$ denote the two copies of $G$ in $G \square K_2$ corresponding to the vertices $a$ and $b$ of $K_2$. Let $e=uv$ be the edge in $G$ corresponding to $e_a$. We denote the color of $e_a$ as $c_e$ and the color of $f$ as $c_f$.

    The proof consists of two cases: $c_e \neq c_f$ and $c_e=c_f$. In both cases, we may assume that the degree of the vertex $w$ and its neighbors in $G$ is exactly $3$. If their degrees were lower, we could attach leaf vertices to $w$ and its neighbors until their degrees reached $3$. The resulting graph $G'$ remains subcubic and, if the precoloring can be extended to a proper 4-edge-coloring of $G' \square K_2$, its restriction to $G \square K_2$ yields a suitable extension for $G \square K_2$. (Removing edges and vertices preserves the propriety of the coloring.)

    \vspace{0.2 cm}

    \noindent \textbf{Case 1: The precolored edges have different colors ($c_e \neq c_f$)}

    \vspace{0.2 cm}

    If $w \in \{u, v\}$, the edges $e_a$ and $f$ are incident. Since they are precolored with different colors, we can trivially satisfy the condition by permuting the colors of a proper 4-edge-coloring (the construction in the next paragraph shows that a 4-edge-coloring exists).

    If $w \notin \{u, v\}$, our goal is to find a 4-edge-coloring $\phi$ of $G$ such that $\phi(e) = c_e$ and some edge incident to $w$ is also colored $c_e$. Given such a coloring, we apply it to both $G_a$ and $G_b$. For any vertex $s \in V(G)$, the edges incident to $(s, a)$ in $G_a$ and to $(s, b)$ in $G_b$ are assigned the same three colors. This leaves one color available for each edge $f_s = (s, a)(s, b)$. Since an edge incident to $w$ in $G$ is colored $c_e$, the color $c_e$ is unavailable for the vertical edge $f = (w, a)(w, b)$. By permuting the three colors other than $c_e$ if necessary, we can ensure that $f$ is assigned $c_f$.

    To achieve this, choose an edge $g \in E(G)$ incident to $w$ such that $g$ is not adjacent to $e$. Since $d(w)=3$, such an edge $g$ is guaranteed to exist. Let $M$ be a maximal matching in $G$ containing both $e$ and $g$. We assign color $c_e$ to all edges in $M$ and remove them from $G$. Let $U = V(M)$ and $V = V(G) \setminus U$. In the remaining graph $G' = G \setminus M$, every vertex in $U$ has degree at most 2, and $V$ is an independent set (by the maximality of $M$). Thus, by Lemma~\ref{3-coloring}, $G'$ is 3-edge-colorable. This yields a proper 4-edge-coloring of $G$ in which $c_e$ is assigned to both $e$ and an edge incident to $w$.

    \vspace{0.2 cm}

    \noindent \textbf{Case 2: The precolored edges have the same color ($c_e = c_f = c$)}

    \vspace{0.2 cm}

    In this case, we need to find a 4-edge-coloring $\phi$ of $G$ such that $\phi(e) = c$, but no edge incident to $w$ is colored $c$. Given such a coloring, we apply it to both $G_a$ and $G_b$. For any vertex $s \in V(G)$, the edges incident to $(s, a)$ in $G_a$ and to $(s, b)$ in $G_b$ are assigned the same three colors. This leaves one color available for each edge $f_s = (s, a)(s, b)$. Furthermore, at vertices $(w, a)$ and $(w, b)$, the color $c$ does not appear on any edge within $G_a$ or $G_b$. Thus, $c$ is the unique available color for the vertical edge $f = (w, a)(w, b)$.

    This requires finding a matching $M$ in $G$ such that $e \in M$, $w \notin V(M)$, and the remaining graph $G \setminus M$ is 3-edge-colorable. According to Lemma~\ref{3-coloring}, $G \setminus M$ is 3-colorable if the subgraph induced by the unmatched vertices $V = V(G) \setminus V(M)$ contains at most one edge. (While not necessary, this condition is sufficient.)

    Since we require $w \in V$, it follows from the maximality of $M$ that any edge in $G[V]$ must be incident to $w$. Since $d_G(w)=3$, $G[V]$ has at most one edge if and only if $M$ covers at least two of the three neighbors of $w$. Let $N(w) = \{x, y, z\}$. Thus, we seek a matching $M$ in $G \setminus \{w\}$ such that $e \in M$ and $|V(M) \cap \{x, y, z\}| \ge 2$.

    \begin{enumerate}
        \item If both endpoints of $e$ are in $N(w)$, then any matching containing $e$ is necessarily suitable.
        \item If exactly one of the endpoints of $e$ is in $N(w)$ (say $x$), then two cases are possible. If $yz$ is an edge, then let $M$ be a maximal matching in $G \setminus \{w\}$ containing both $e$ and $yz$. If $yz \notin E(G)$, then the neighborhood of $\{y,z\}$ contains an element outside $\{ w, u, v \}$. Thus, there exists an edge $h$ such that $\{e,h\}$ is a matching in $G \setminus \{w\}$ that covers two vertices in $N(w)$. In this case, let $M$ be a maximal matching in $G \setminus \{w\}$ containing $\{e, h\}$ as a subset.
        \item If neither endpoint of $e$ is in $N(w)$, we have more options. If $N(w)$ is not an independent set, let $g$ be an edge induced by $N(w)$ and let $M$ be a maximal matching in $G \setminus \{w\}$ containing $\{e, g\}$ as a subset. Thus, we may assume that $N(w)$ is independent. If there are two vertices in $N(w)$ (say, $x$ and $y$) such that their combined neighborhood contains at least two vertices outside the set $\{u, v, w\}$, and both of them have a neighbor outside this set, then there exists a matching $M'$ in $G \setminus \{u, v, w\}$ that covers $\{x, y\}$. Let us extend $M' \cup \{e\}$ to a maximal matching $M$ in $G \setminus \{w\}$. If no such vertices exist, a straightforward case analysis shows that this yields (up to isomorphism) the following three configurations (shown also in Figure~\ref{fig:end_cases}).
        \begin{enumerate}[label=(\alph*)]
            \item If there is a vertex (say $z$) that has two neighbors outside the set $\{u, v, w\}$, then $x$ and $y$ can only have neighbors in $\{u,v\}$.
        \end{enumerate}

        If every vertex has at most one neighbor outside the set $\{u, v, w\}$, then at least two vertices must have such a neighbor (since out of the six edges leaving $N(w)$ in $G \setminus \{w\}$, at most four can go to $\{u, v\}$), and this neighbor is necessarily the same vertex for $x$, $y$ and $z$. Based on this, the other two possible cases are:

        \begin{enumerate}[label=(\alph*)]
        \setcounter{enumii}{1}
            \item One vertex (say $x$) has only neighbors in $\{u,v\}$, while $y$ and $z$ each have one neighbor in $\{u,v\}$ and one common neighbor outside the set $\{u, v, w\}$.

            \item All three vertices $x$, $y$ and $z$ have one neighbor in $\{u,v\}$ and share one common neighbor outside the set $\{u, v, w\}$.
        \end{enumerate}
        
        In these last two cases, we proceed differently than in the previous ones. We use the notation shown in Figure~\ref{fig:end_cases}. The coloring shown in Figure~\ref{fig:end_cases} defines a proper edge-coloring of $G_2$ and the edge between $G_1$ and $G_2$, respecting the color of $e$ such that no edge incident to $w$ is colored with $c$ (the colors used are $\{c,c_1, c_2,c_3\}$). We can color $G_1$ using Vizing's theorem with four colors. By permuting the colors if necessary (within $G_1$ only), we can ensure that the edge between $G_1$ and $G_2$ does not conflict with the two adjacent edges in $G_1$.
    \end{enumerate}

    \begin{figure}
        \centering

        \begin{subfigure}{0.45\textwidth}
            \centering
            \resizebox{\textwidth}{!}{
            \begin{tikzpicture}

            \tikzset{v/.style={circle, fill=black, inner sep=1.5pt}}

            \node[v] (z) at (-3,0.5) {};
            \node[v] (o) at (-4,-1) {};
            \node[v] (b) at (-2,-1) {};

            \node[left] at (z) {$z$};
            \node[left] at (o) {};

            \draw (z) -- (o);
            \draw (z) -- (b);

            \draw (-3,-0.5) circle (1.6);
            \node at (-3,-1.7) {\large $G_1$};

            \node[v] (w) at (0,2) {};
            \node[v] (y) at (0,0.5) {};
            \node[v] (k) at (0,-1) {};
            \node[v] (x) at (2,0.5) {};
            \node[v] (r) at (2,-1) {};

            \node[above] at (w) {$w$};
            \node[left] at (y) {$y$};
            \node[left] at (k) {$u$};
            \node[right] at (x) {$x$};
            \node[right] at (r) {$v$};

            \draw (w) -- (y);
            \node at (-0.3,1.1) {$c_2$};
            \draw (y) -- (k);
            \node at (2.3,-0.3) {$c_1$};
            \draw (w) -- (x);
            \node at (1.3,1.3) {$c_3$};
            \draw (k) -- (r);
            \node at (1,-1.2) {$c$};
            \draw (x) -- (r);
            \node at (-0.3,-0.3) {$c_1$};
            \draw (y) -- (r);
            \node at (0.7,0.3) {$c_3$};
            \draw (k) -- (x);
            \node at (1.3,0.3) {$c_2$};

            \draw (z) -- (w);
            \node at (-1.3,1.1) {$c_1$};

            \draw (1,0.5) ellipse (1.9 and 2.6);
            \node at (1,-1.7) {\large $G_2$};
            \end{tikzpicture}
            }
            \caption{}
        \end{subfigure}
        \begin{subfigure}{0.45\textwidth}
            \centering
            \resizebox{\textwidth}{!}{
            \begin{tikzpicture}

            \tikzset{v/.style={circle, fill=black, inner sep=1.5pt}}

            \node[v] (z) at (-2,0.5) {};
            \node[v] (o) at (-4,-1) {};
            \node[v] (b) at (-2,-1) {};

            \node[left] at (z) {$z$};

            \draw (z) -- (b);
            \node at (-2.25,-0.3) {$c_2$};
            \draw (o) -- (b);
            \node at (-2.6,-1.2) {$c$};

            \draw (-4,-1) circle (1.1);
            \node at (-4,-1.7) {\large $G_1$};

            \node[v] (w) at (0,2) {};
            \node[v] (y) at (0,0.5) {};
            \node[v] (k) at (0,-1) {};
            \node[v] (x) at (2,0.5) {};
            \node[v] (r) at (2,-1) {};

            \node[above] at (w) {$w$};
            \node[left] at (y) {$y$};
            \node[left] at (k) {$u$};
            \node[right] at (x) {$x$};
            \node[right] at (r) {$v$};

            \draw (y) -- (b);
            \node at (-0.7,0.3) {$c_1$};
            \draw (w) -- (y);
            \node at (-0.3,1.1) {$c_2$};
            \draw (z) -- (k);
            \node at (-1.3,0.3) {$c_3$};
            \draw (w) -- (x);
            \node at (1.3,1.3) {$c_3$};
            \draw (k) -- (r);
            \node at (1,-1.2) {$c$};
            \draw (x) -- (r);
            \node at (2.3,-0.3) {$c_1$};
            \draw (y) -- (r);
            \node at (0.7,0.3) {$c_3$};
            \draw (k) -- (x);
            \node at (1.3,0.3) {$c_2$};

            \draw (z) -- (w);
            \node at (-1.1,1.5) {$c_1$};

            \draw (0.3,0) ellipse (2.8 and 2.6);
            \node at (0.3,-1.7) {\large $G_2$};
            \end{tikzpicture}
            }
            \caption{}
        \end{subfigure}

        \begin{subfigure}{0.45\textwidth}
            \centering
            \resizebox{\textwidth}{!}{
            \begin{tikzpicture}

            \tikzset{v/.style={circle, fill=black, inner sep=1.5pt}}

            \node[v] (z) at (-2,0.5) {};
            \node[v] (o) at (4.6,-1) {};
            \node[v] (b) at (-2,-1) {};

            \node[left] at (z) {$z$};

            \draw (z) -- (b);
            \node at (-2.25,-0.3) {$c_2$};
            \draw (o) -- (r);
            \node at (3.2,-1.2) {$c_1$};

            \draw (4.6,-1) circle (1.1);
            \node at (4.6,-1.7) {\large $G_1$};

            \node[v] (w) at (0,2) {};
            \node[v] (y) at (0,0.5) {};
            \node[v] (k) at (0,-1) {};
            \node[v] (x) at (2,0.5) {};
            \node[v] (r) at (2,-1) {};

            \node[above] at (w) {$w$};
            \node[left] at (y) {$y$};
            \node[left] at (k) {$u$};
            \node[right] at (x) {$x$};
            \node[below] at (r) {$v$};

            \draw (y) -- (b);
            \node at (-0.7,0.3) {$c_3$};
            \draw (w) -- (y);
            \node at (-0.3,1.1) {$c_2$};
            \draw (z) -- (k);
            \node at (-1.3,0.3) {$c_3$};
            \draw (w) -- (x);
            \node at (1.3,1.3) {$c_3$};
            \draw (k) -- (r);
            \node at (1,-1.2) {$c$};
            \draw (x) -- (r);
            \node at (2.3,-0.3) {$c_2$};
            \draw (y) -- (k);
            \node at (0.3,0.2) {$c_1$};
            \draw (b) -- (x);
            \node at (1.3,0) {$c_1$};

            \draw (z) -- (w);
            \node at (-1.1,1.5) {$c_1$};

            \draw (0.3,0) ellipse (2.8 and 2.6);
            \node at (0.3,-1.7) {\large $G_2$};
            \end{tikzpicture}
            }
            \caption{}
        \end{subfigure}

        \caption{Coloring of $G_2$}
    \label{fig:end_cases}
    \end{figure}
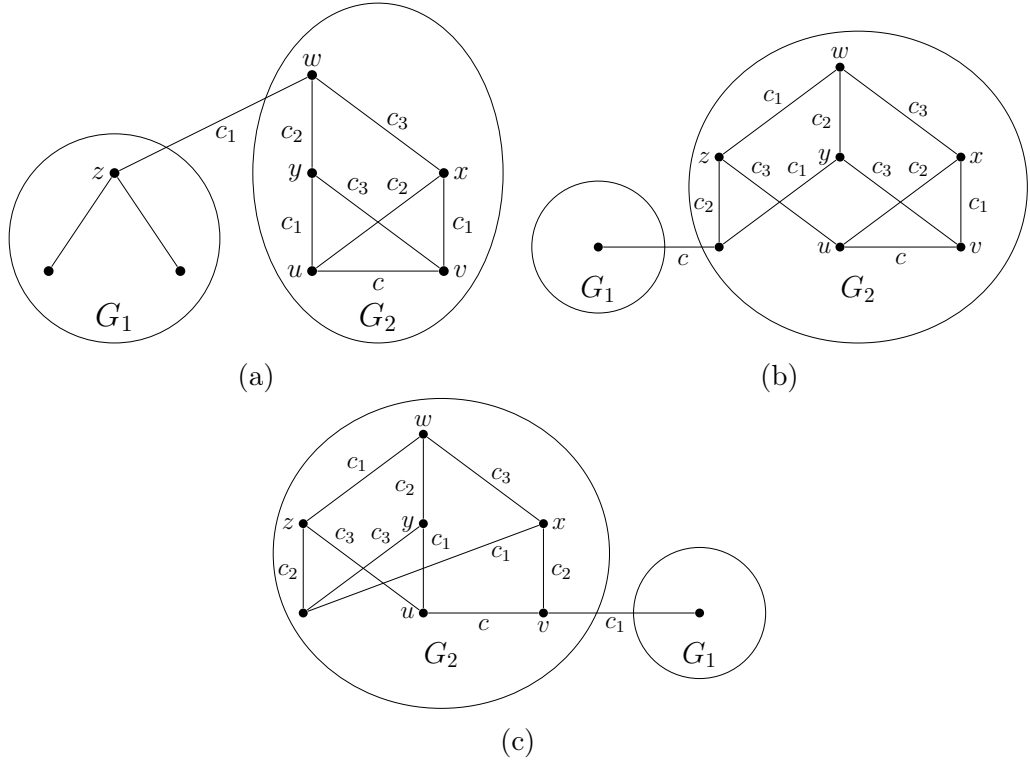
\end{proof}

\begin{theorem} \label{subcubic_Cartesian}
    Suppose that for a subcubic graph $G$, at most four edges of $G \square K_2$ are properly precolored. Then the precoloring can be extended to a proper 5-edge-coloring of $G \square K_2$.
\end{theorem}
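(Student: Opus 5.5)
We split according to how the (at most four) precolored edges are distributed between $G_a$, $G_b$ and the vertical matching $M=M_{ab}$. The two tools are Theorem~\ref{3-subcubic}, which extends any precoloring of at most three edges of $G$ to a $4$-edge-coloring, and Lemma~\ref{2-edge_4-color}. Since at most four of the five colors are prescribed, we may assume that color $5$ is unused and try to color $M$ with color $5$ whenever possible. Throughout we use the square-swap trick of Theorem~\ref{edge}: a prescription $c$ on a vertical edge $(x,a)(x,b)$ is transferred to the two horizontal edges $(x,a)(y,a)$ and $(x,b)(y,b)$. Once both copies have been colored consistently and the vertical edges colored $5$, the square $(x,a)(y,a)(y,b)(x,b)$ alternates $c$ and $5$, and swapping its colors gives the vertical edge the color $c$.

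\textbf{Case A: no vertical edge is precolored.} If each copy contains at most three precolored edges, we extend each copy separately by Theorem~\ref{3-subcubic} using colors $\{1,2,3,4\}$ and color all of $M$ with $5$. Otherwise all four precolored edges lie in one copy, say $G_a$. We then drop one precolored edge $e$, extend the other three by Theorem~\ref{3-subcubic}, and restore $e$ by recoloring its conflicting neighbors with color $5$, exactly as in Lemma~\ref{extra-color}. Next we copy this coloring to $G_b$. Each vertex of $G$ now sees at most three colors in a $5$-color palette, so each vertical edge has a free color and $M$, being a matching, can be colored greedily.

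\textbf{Case B: some vertical edges are precolored.} Let $t\ge 1$ be their number. If the projection is possible, meaning we can choose neighbors $y_x$ with pairwise disjoint squares avoiding all other precolored edges, then we project the vertical prescriptions into both copies. Each copy then carries at most $4-t+t$ prescriptions, but the projected ones are shared by the two copies. When $t\ge 2$, or when the remaining prescriptions split between the copies, each copy ends up with at most three prescriptions, and we can color both copies by Theorem~\ref{3-subcubic}, color $M$ with $5$, and swap the squares. The bad situations are two. The first is a copy receiving four prescriptions, which happens with $t=1$ and three horizontal edges in one copy. The second is the projection failing because $G$ has degree at most $3$ and is not triangle-free, so a suitable $y_x$ may not exist. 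For the first, we imitate the last paragraph of Theorem~\ref{edge}: we rename colors so that $5$ is the color of the vertical prescription $c_f$ and color the copy without that prescription, which leaves three constraints. For the second we fall back on Lemma~\ref{2-edge_4-color}, which handles exactly one horizontal edge together with one vertical edge. With $t=1$ and three horizontal edges, one of which is incident to $w$ or otherwise blocks every $y_x$, we drop the remaining horizontal prescriptions to apply Lemma~\ref{2-edge_4-color}. We then restore them with the fifth color, along the lines of Lemma~\ref{extra-color} but carried out symmetrically in both copies so that the vertical edges stay consistent.

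\textbf{Main obstacle.} The hardest step is the configurations in which the vertical prescriptions cannot be projected because of small degree or triangles at $w$, combined with several horizontal prescriptions nearby. We would handle them by padding $G$ with pendant vertices, as in the proof of Lemma~\ref{2-edge_4-color}, so that $w$ and its neighbors have degree $3$. We would then do a local case analysis on how the at most four precolored edges meet $N[w]$ in $G$. For each configuration we would either choose $y_x$ with a fresh neighbor, or color with $c$ a matching containing the relevant horizontal edges and avoiding $w$, and complete it via Lemma~\ref{3-coloring}, as in Case~2 of Lemma~\ref{2-edge_4-color}. The fifth color serves as slack for the remaining prescriptions. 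Lemma~\ref{3-coloring} is the engine that guarantees a $3$-edge-colorable remainder after removing a prescribed matching, and we expect most of the effort to go into checking that the required matching exists in every local configuration.
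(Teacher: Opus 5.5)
Your Case A is correct and matches the paper's treatment of the case with no precolored vertical edges. Your remedy for $t=1$ with all three horizontal prescriptions in one copy also works along the lines of Theorem~\ref{edge}: once $5$ is renamed as the vertical color, no projection is needed. The gap is in the rest of Case B.

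First, your count is wrong. After projection, a copy carries the $t$ projected prescriptions plus its own horizontal ones. So both copies carry four when $t=4$, the copy containing the horizontal edge carries four when $t=3$, and so does a copy containing both horizontal edges when $t=2$. Theorem~\ref{3-subcubic} applies to none of these.

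Renaming $5$ as a vertical color would fix the count, but only if the remaining vertical prescriptions can be projected, and for $t\ge 2$ this can fail. Let the four prescribed vertices span the $4$-cycle $s\,u\,t\,v$. Let $u,v$ be prescribed $c_1$, with a common neighbor $p$ as their only neighbor outside $\{s,t,u,v\}$. Let $s,t$ be prescribed $c_2$, with common only outside neighbor $q$. This is the configuration of Figure~\ref{fig:4-vertical-worst_case}. The only admissible choices are $y_u=y_v=p$ and $y_s=y_t=q$, so whichever class you rename to $5$, the other class cannot be projected onto disjoint squares.

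Your only fallback for a failed projection concerns $t=1$, and it is invalid. Lemma~\ref{2-edge_4-color} handles two prescriptions with four colors, while Lemma~\ref{extra-color} restores one dropped edge per spare color. The single spare color $5$ therefore cannot restore two dropped edges. For example, suppose an endpoint of each dropped edge is adjacent to a common vertex $z$, and in the $4$-coloring the two edges at $(z,a)$ carry the colors prescribed on the respective dropped edges. Then both are recolored $5$ at $(z,a)$. Finally, your ``Main obstacle'' paragraph is only a plan. The tool it names, Lemma~\ref{3-coloring}, gives an unconstrained $3$-edge-coloring, whereas here the remainder still carries prescriptions.

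The missing idea, which the paper uses for $t\in\{3,4\}$, is to encode each vertical prescription at a vertex $x$ as a pendant edge $xx'$ of the same color. This gives an auxiliary graph $G'$ in which the prescribed vertices have degree $4$. Take a $5$-edge-coloring of $G'$ that respects the pendant edges and the (at most one) horizontal prescription, and copy it to both copies. Then the prescribed color is free at both ends of each prescribed vertical edge, and every other vertical edge has at least two free colors. To get this coloring, pick a prescribed color $c$ and find a matching of $G'$ covering all degree-$4$ vertices and containing every edge prescribed $c$. Color it $c$, and finish the subcubic remainder, which carries at most three prescriptions, by Theorem~\ref{3-subcubic}. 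The configuration above is where the paper finds no such matching, and it handles it by an explicit recoloring of the $4$-cycle.

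For $t\le 2$ the paper uses direct arguments instead of general projection. It colors the copies with four colors avoiding a suitable color, and uses at most one square swap whose square exists trivially or by a simple degree count. It invokes Lemma~\ref{2-edge_4-color} only when some color occurs on two prescribed edges; both can then be dropped and restored, because that color is avoided altogether.
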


\begin{proof}
    Let $V(K_2)=\{a,b\}$. Define $G_a=\{(v,a) \mid v \in V(G)\}$ and $G_b=\{(v,b) \mid v \in V(G)\}$. We call edges of the form $(v,a)(v,b)$ \textit{vertical edges} and edges in $E(G_a) \cup E(G_b)$ \textit{horizontal edges}.

    Suppose first that at least three of the four precolored edges share the same color $c$. In this case, we color the graph with four colors while avoiding $c$. If there is an edge not precolored with $c$ (of which there is at most one), we assign it its prescribed color and ignore the precoloring on the edges colored $c$. This is possible since $\chi'(G \square K_2)=4$ (the Cartesian product of a Class 1 and a Class 2 graph is always Class 1), and the colors in a proper coloring can be permuted arbitrarily.

    Finally, we recolor each edge originally precolored with $c$ back to $c$, obtaining a proper extension of the precoloring. Hence, we may assume that no three precolored edges share the same color.

    Furthermore, we may assume that if $(u,a)(u,b)$ is a precolored edge, then $u$ has degree three in $G$. Indeed, if its degree were smaller, we could add a new vertex $t$ and an edge $ut$; any proper extension of the precoloring on this modified graph would provide a proper extension for the original $G \square K_2$ as well.

    Since we use five colors but only four edges are precolored, at least one color is unused; denote such a color by $c^{\circ}$. In the remainder of the proof, we distinguish cases based on the number of precolored vertical edges.

    \vspace{0.2 cm}

    \noindent \textbf{I. 0 precolored vertical edges}

    \vspace{0.2 cm}

    If there are no vertical precolored edges and both $G_a$ and $G_b$ contain precolored edges, then by Theorem~\ref{3-subcubic} we can color each of $G_a$ and $G_b$ without using the color $c^{\circ}$ while respecting the precolorings. The vertical edges can then all be colored $c^{\circ}$, yielding a proper extension of the precoloring.

    If, on the other hand, all precolored edges lie in $G_a$ (the case for $G_b$ being symmetric), then by Theorem~\ref{3-subcubic} and Lemma~\ref{extra-color}, $G_a$ can be colored with five colors while respecting the precoloring. We then copy this coloring onto $G_b$ (as there are no precolored edges in $G_b$).

    As a result, for each vertical edge, its endpoints are incident to edges of the same colors in $G_a$ and $G_b$, leaving two colors available for the edge itself. Assigning one of these available colors to each vertical edge yields a proper extension of the precoloring.
    
    \vspace{0.2 cm}

    \noindent \textbf{II. 1 precolored vertical edge}

    \vspace{0.2 cm}

    Suppose that exactly one vertical edge is precolored, namely $(s,a)(s,b)$ with color $c$. We consider the following cases.

    \begin{itemize}
        \item If the vertical edge is the only one precolored with $c$, then we color $G_a$ and $G_b$ according to their precolorings without using $c$ (which is possible by Theorem~\ref{3-subcubic}). Assigning color $c$ to all vertical edges then yields a proper extension of the precoloring.
        
        \item If there is another edge, say $(u,a)(v,a)$, precolored with $c$ (of which there is at most one), while $(u,b)(v,b)$ is not precolored, we first temporarily modify the precoloring. Prescribe the color $c^{\circ}$ on both $(u,a)(v,a)$ and $(u,b)(v,b)$. We then color $G_a$ and $G_b$ with four colors, avoiding $c$, while respecting the (new) precolorings (this is possible by Theorem~\ref{3-subcubic}, since each of $G_a$ and $G_b$ has at most three precolored edges). Next, we color all vertical edges with $c$. Finally, consider the 4-cycle $((u,a), (v,a), (v,b), (u,b))$, whose edges are colored with $c$ and $c^{\circ}$. By swapping colors $c$ and $c^{\circ}$ along this cycle, we restore the original precoloring without introducing any conflicts.
        
        \item If $(u,a)(v,a)$ is precolored with $c$ and $(u,b)(v,b)$ with another color $c^*$, but no edge incident to $(u,a)(v,a)$ is precolored with $c^*$, we can color $G_a$ and $G_b$ identically with four colors while avoiding $c$, satisfying all precolorings in $G_a$ and $G_b$ except for $(u,a)(v,a)$. (There are only two precolored edges.) We then recolor $(u,a)(v,a)$ with $c$. At this point, four colors appear at the endpoints of the vertical edges $(u,a)(u,b)$ and $(v,a)(v,b)$, leaving the fifth color available. Coloring these vertical edges with their respective available colors and all other vertical edges with $c$ yields a proper extension of the precoloring.
        
        \item If $(u,a)(v,a)$ is precolored with $c$ and $(u,b)(v,b)$ with another color $c^*$, and there exists an edge $(u,a)(w,a)$ precolored with $c^*$, then there exists an edge $st \in E(G)$ such that $t \notin \{u, v\}$. We temporarily modify the precoloring: assign color $c$ to both $(s,a)(t,a)$ and $(s,b)(t,b)$. Then color $G_a$ and $G_b$ independently with four colors, avoiding $c^{\circ}$, while respecting the precolorings; this is possible by Theorem~\ref{3-subcubic}, since each contains at most three precolored edges. Next, color all vertical edges with $c^{\circ}$. Finally, the cycle $((s,a), (t,a), (t,b), (s,b))$ uses only $c$ and $c^{\circ}$. Swapping these two colors along the cycle preserves proper coloring and restores the original precoloring.
    \end{itemize}

    \vspace{0.2 cm}

    \noindent \textbf{III. 2 precolored vertical edges}

    \vspace{0.2 cm}

    In this part of the proof, let $(u,a)(u,b)$ and $(v,a)(v,b)$ be the two precolored vertical edges with colors $c_u$ and $c_v$, respectively.

    Now, consider the case where $c_u=c_v$; call this common color $c$. We extend the precoloring to $G_a$ and $G_b$ independently using four colors such that color $c$ is avoided in both subgraphs. This is possible by Theorem~\ref{3-subcubic}, since no horizontal edge is precolored with $c$. Assigning $c$ to all vertical edges then yields a proper 5-edge-coloring that extends the original precoloring.

    Suppose now that each of the colors $c_u$ and $c_v$ appears on exactly one precolored edge. There must exist an edge $e \in E(G)$ incident with $u$ or $v$ such that $e \neq uv$ and the horizontal edges corresponding to $e$ are not precolored in either $G_a$ or $G_b$. Without loss of generality, let $e=ux$. We temporarily modify the precoloring: precolor the edges corresponding to $e$ in both $G_a$ and $G_b$ with color $c_u$. This does not create a conflict since $c_u$ was not assigned to any other edge. Next, we extend the precoloring on $G_a$ and $G_b$ independently using four colors while avoiding $c_v$. (this is possible by Theorem~\ref{3-subcubic}, since each graph contains at most three precolored edges). We then assign $c_v$ to all vertical edges. Finally, the cycle $((u,a), (x,a), (x,b), (u,b))$ forms a 4-cycle where the horizontal edges are colored $c_u$ and the vertical edges are colored $c_v$. Swapping these two colors along this 4-cycle preserves the proper coloring and ensures that the vertical edge $(u,a)(u,b)$ receives the required color $c_u$.

    If a color (say $c_v$) is assigned to two precolored edges (necessarily one vertical and one horizontal), we temporarily ignore the precoloring of these edges. We extend the precoloring of the remaining two edges using four colors while avoiding $c_v$, which is possible by Lemma~\ref{2-edge_4-color} (one vertical and one horizontal edge remain). Finally, by restoring $c_v$ to the two edges originally precolored with it, we obtain a proper extension of the entire precoloring.

    \vspace{0.2 cm}

    \noindent \textbf{IV. 3 precolored vertical edges}

    \vspace{0.2 cm}

    In the following two sections (dealing with three and four precolored vertical edges), we apply a new approach. Our goal is to color $G_a$ and $G_b$ identically using five colors such that the horizontal color prescription (if any) is met and the vertical precolorings do not cause conflicts later on.
    
    To achieve this, we define an extended version of $G$, denoted by $G'$ as follows: for each precolored vertical edge $(u,a)(u,b)$, we add a new vertex $u'$ to $G$, join it to $u$, and assign the edge $uu'$ the color prescribed for $(u,a)(u,b)$. In the resulting graph $G'$, three or four vertices have degree four, while all others have degree at most three and there are at most three precolored edges in $G'$.
    
    If this precoloring of $G'$ extends to a proper 5-edge-coloring, then so does the original precoloring of $G \square K_2$. We simply copy this coloring to both $G_a$ and $G_b$, ensuring that the horizontal precolored edge (if any) receives its prescribed color and the precolored vertical edges also keep theirs. Moreover, each vertical edge will have two available colors, as its endpoints share the same three incident colors.

    In each case, our strategy is to select a color $c$ used in the precoloring and find a matching $M$ that covers all degree-4 vertices and includes any edges already assigned that color (there are at most two such edges). We then color all edges in $M$ with color $c$ and remove them from $G'$. Since the resulting graph is subcubic and contains at most three precolored edges, the coloring can be extended using the remaining four colors by Theorem~\ref{3-subcubic}.

    Following the approach described above, we consider the case of three precolored vertical edges. Depending on the number of vertices incident to two precolored edges (which can be 0, 1, or 2), there are three possible scenarios (see Figure~\ref{fig:3-vertical-scenarios}). We examine each case separately and further divide them into subcases based on whether any edges share a color. We use the notation introduced in Figure~\ref{fig:3-vertical-scenarios}.

    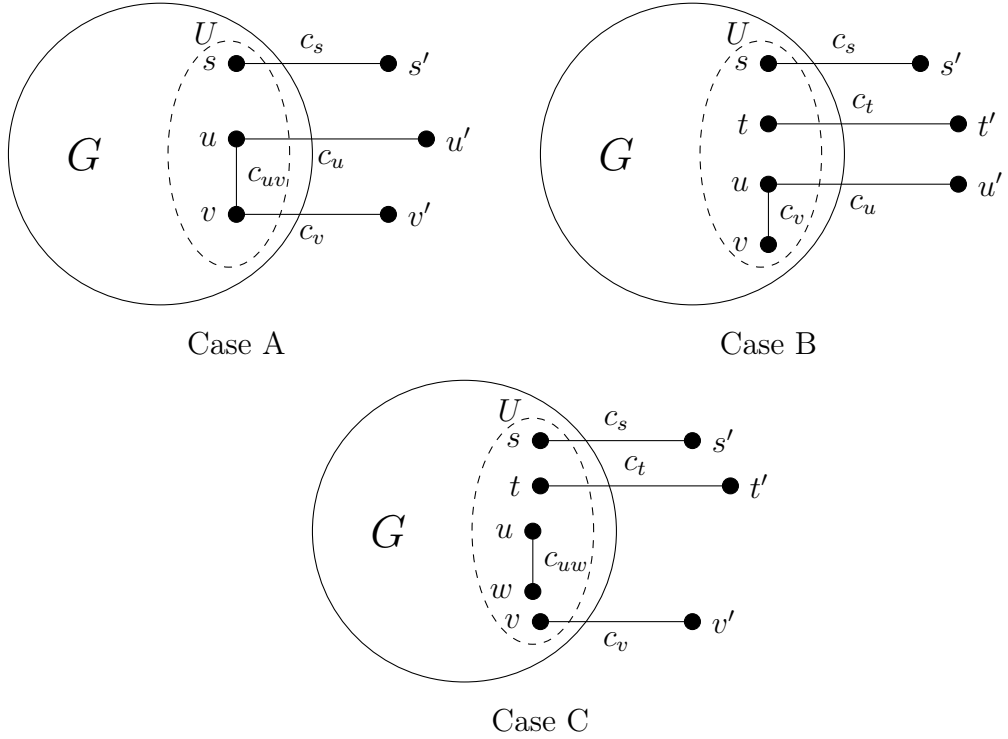
\begin{figure}[h]
        \centering
        \begin{tikzpicture}[
            scale=1,
            vertex/.style={circle, fill=black, draw=black, minimum size=6pt, inner sep=0pt},
        ]

            \draw (0,0) circle (2);

            \coordinate (v1) at (1, 1.2);
            \coordinate (v2) at (1, 0.4);
            \coordinate (v3) at (1, -0.4);
            \coordinate (v4) at (1, -1.2);

            \coordinate (u1) at (3, 1.2);
            \coordinate (u2) at (3.5, 0.4);
            \coordinate (u3) at (3.5, -0.4);

            \draw (v1) -- node[above] {$c_s$} (u1);
            \draw (v2) -- node[above] {$c_t$} (u2);
            \draw (v3) -- node[below] {$c_u$} (u3);
            \draw (v4) -- node[right] {$c_v$} (v3);

            \node[vertex, label=left:$s$] at (v1) {};
            \node[vertex, label=left:$t$] at (v2) {};
            \node[vertex, label=left:$u$] at (v3) {};
            \node[vertex, label=left:$v$] at (v4) {};

            \node[vertex, label=right:$s'$] at (u1) {};
            \node[vertex, label=right:$t'$] at (u2) {};
            \node[vertex, label=right:$u'$] at (u3) {};

            \node at (-1,0) {\Large $G$};

            \draw[dashed]
                (0.9,0) ellipse (0.8 and 1.5);

            \node at (0.6,1.6) {$U$};

            \node at (1,-2.5) {Case B};

            \draw (-7,0) circle (2);

            \coordinate (v11) at (-6, 1.2);
            \coordinate (v13) at (-6, 0.2);
            \coordinate (v14) at (-6, -0.8);

            \coordinate (u11) at (-4, 1.2);
            \coordinate (u13) at (-3.5, 0.2);
            \coordinate (u14) at (-4, -0.8);

            \draw (v11) -- node[above] {$c_s$} (u11);
            \draw (v14) -- node[right] {$c_{uv}$} (v13);
            \draw (v13) -- node[below] {$c_u$} (u13);
            \draw (v14) -- node[below] {$c_v$} (u14);

            \node[vertex, label=left:$s$] at (v11) {};
            \node[vertex, label=left:$u$] at (v13) {};
            \node[vertex, label=left:$v$] at (v14) {};

            \node[vertex, label=right:$s'$] at (u11) {};
            \node[vertex, label=right:$u'$] at (u13) {};
            \node[vertex, label=right:$v'$] at (u14) {};

            \node at (-8,0) {\Large $G$};

            \draw[dashed]
                (-6.1,0) ellipse (0.8 and 1.5);

            \node at (-6.4,1.6) {$U$};

            \node at (-6,-2.5) {Case A};

            \draw (-3,-5) circle (2);

            \coordinate (v1) at (-2, -3.8);
            \coordinate (v2) at (-2, -4.4);
            \coordinate (v3) at (-2.1, -5);
            \coordinate (v5) at (-2.1, -5.8);
            \coordinate (v4) at (-2, -6.2);

            \coordinate (u1) at (0, -3.8);
            \coordinate (u2) at (0.5, -4.4);
            \coordinate (u4) at (0, -6.2);

            \draw (v1) -- node[above] {$c_s$} (u1);
            \draw (v2) -- node[above] {$c_t$} (u2);
            \draw (v3) -- node[right] {$c_{uw}$} (v5);
            \draw (v4) -- node[below] {$c_v$} (u4);

            \node[vertex, label=left:$s$] at (v1) {};
            \node[vertex, label=left:$t$] at (v2) {};
            \node[vertex, label=left:$u$] at (v3) {};
            \node[vertex, label=left:$v$] at (v4) {};
            \node[vertex, label=left:$w$] at (v5) {};

            \node[vertex, label=right:$s'$] at (u1) {};
            \node[vertex, label=right:$t'$] at (u2) {};
            \node[vertex, label=right:$v'$] at (u4) {};

            \node at (-4,-5) {\Large $G$};

            \draw[dashed]
                (-2.1,-5) ellipse (0.8 and 1.5);

            \node at (-2.4,-3.4) {$U$};

            \node at (-2,-7.5) {Case C};

        \end{tikzpicture}
        \caption{Scenarios of three vertical edges}
        \label{fig:3-vertical-scenarios}
    \end{figure}

    \textit{Case A} is straightforward. Since $d(s)=3$, at least one edge incident to $s$, let us call it $e$, has an endpoint in $G \setminus U$. If $c_{uv}=c_s$, then $M = \{uv,ss'\}$ is a suitable matching; otherwise, $M = \{uv, e\}$ is.

    In \textit{Case B}, we distinguish two subcases. If another edge is precolored with $c_u$ (where we may assume $c_s=c_u$), then since $d(t)=3$, there exists an edge $e$ incident to $t$ whose other endpoint is distinct from both $u$ and $s$. In this case, we take the matching $M = \{uu', ss', e\}$. If no other edge is precolored with $c_u$, then both $s$ and $t$ have at least two neighbors in $G \setminus \{u\}$; thus, there exists a matching $M'$ covering $\{s,t\}$ that is disjoint from $u$. (This matching can be the edge $st$, if it exists, or a pair of independent edges.) We then use the matching $M = M' \cup \{ \ {uu'} \ \}$.

    In \textit{Case C}, we consider the following subcases. If two edges other than $uw$ are precolored with the same color (say, $c_s=c_t$), there exists an edge $e$ in $G$ incident to $v$ whose other endpoint is neither $s$ nor $t$. Here, we take the matching $M = \{ss', tt', e\}$.
    
    If $ss', tt'$, and $vv'$ are all precolored distinctly and $\{s, t, v\}$ is not an independent set (say, $st$ is an edge), we take the matching $M = \{st, vv'\}$. If $\{s, t, v\}$ is an independent set, we may assume $c_v \neq c_{uw}$. Since $s$ and $t$ each have at least two neighbors in $G \setminus \{v\}$, there exists a matching $M'$ covering $\{s, t\}$ that is disjoint from $v$. We then take the matching $M = M' \cup \{vv'\}$.

    \vspace{0.2 cm}

    \noindent \textbf{V. 4 precolored vertical edges}

    \vspace{0.2 cm}

    Suppose now that all four precolored edges are vertical. Let $s, t, u, v$ be their endpoints in $G$, and set $U=\{s, t, u, v\}$. Let $c_s, c_t, c_u, c_v$ denote the corresponding colors (not necessarily distinct). The extended graph $G'$ is shown in Figure \ref{fig:graf}. If $G \cong K_4$, the claim follows immediately. (Ellingham and Goddyn \cite{elgo} proved that if $G$ is a $d$-regular $d$-edge colorable planar graph, then $G$ is $d$-edge-choosable, so $K_4$ is 3-edge-choosable.)

    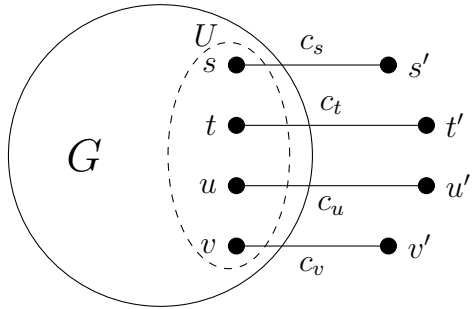
\begin{figure}[h]
        \centering
        \begin{tikzpicture}[
            scale=1,
            vertex/.style={circle, fill=black, draw=black, minimum size=6pt, inner sep=0pt},
        ]

            \draw (0,0) circle (2);

            \coordinate (v1) at (1, 1.2);
            \coordinate (v2) at (1, 0.4);
            \coordinate (v3) at (1, -0.4);
            \coordinate (v4) at (1, -1.2);

            \coordinate (u1) at (3, 1.2);
            \coordinate (u2) at (3.5, 0.4);
            \coordinate (u3) at (3.5, -0.4);
            \coordinate (u4) at (3, -1.2);

            \draw (v1) -- node[above] {$c_s$} (u1);
            \draw (v2) -- node[above] {$c_t$} (u2);
            \draw (v3) -- node[below] {$c_u$} (u3);
            \draw (v4) -- node[below] {$c_v$} (u4);

            \node[vertex, label=left:$s$] at (v1) {};
            \node[vertex, label=left:$t$] at (v2) {};
            \node[vertex, label=left:$u$] at (v3) {};
            \node[vertex, label=left:$v$] at (v4) {};

            \node[vertex, label=right:$s'$] at (u1) {};
            \node[vertex, label=right:$t'$] at (u2) {};
            \node[vertex, label=right:$u'$] at (u3) {};
            \node[vertex, label=right:$v'$] at (u4) {};

            \node at (-1,0) {\Large $G$};

            \draw[dashed]
                (0.9,0) ellipse (0.8 and 1.5);

            \node at (0.6,1.6) {$U$};
        \end{tikzpicture}
        \caption{The extension of the graph $G$}
        \label{fig:graf}
    \end{figure}

    First, consider the case where a color, say $c_s$, appears only once in the precoloring. It follows that another color, say $c_t$, also appears only once. If possible, we choose $s$ or $t$ such that the remaining three vertices in $U$ do not form an independent set (we may assume this vertex is $s$).

    If any of $tu, tv,$ or $uv$ is an edge (say $uv$) and the third vertex $t$ has an incident edge $e$ incident to $G \setminus U$, then the matching $M = \{uv, e, ss'\}$ forms a suitable matching. If $uv \in E(G)$, but $t$ does not have an incident edge to $G \setminus U$, it must be connected to both $u$ and $v$, allowing us to select one of those edges instead of $uv$. If no vertex in $\{t, u, v\}$ has a neighbor in $G \setminus U$, then $G \cong K_4$.

    If $\{t, u, v\}$ is an independent set, consider the bipartite graph between $\{t, u, v\}$ and their neighbors in $G \setminus U$. This graph is subcubic; and each vertex in $\{t, u, v\}$ has degree at least 2, with at least two having degree 3 (as our choice of $s$ ensures $U$ induces at most one edge). By Hall’s condition, there exists a matching $M'$ covering $\{t, u, v\}$. Then $M = M' \cup \{ss'\}$ is a suitable matching.

    The remaining case is when the precolored edges use only two colors, each appearing twice (say, $c_1$ at $u$ and $v$, and $c_2$ at $s$ and $t$). If $uv$ (or $st$) is an edge, or if there exists a matching $M'$ in $G$ covering $\{u,v\}$ (or $\{s,t\}$) such that each edge has an endpoint in $G \setminus U$, then $M = M' \cup \{ss',tt'\}$ (or $M = M' \cup \{uu',vv'\}$) is a suitable matching.
    
    If neither case occurs, then $u$ and $v$ share at most one neighbor, say $x$ in $G \setminus U$, and the same holds for $s$ and $t$ with a neighbor $y$ ($x$ and $y$ are necessarily distinct, since $G$ is subcubic). Moreover, $(s, u, t, v)$ forms a 4-cycle (see Figure~\ref{fig:4-vertical-worst_case}).

    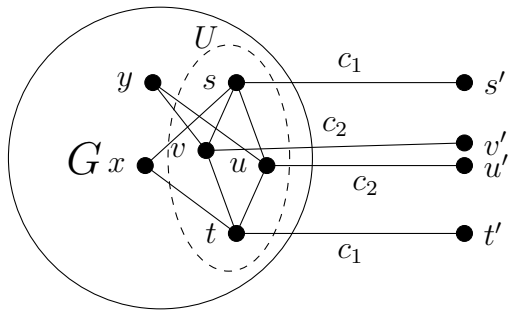
\begin{figure}[h]
        \centering
        \begin{tikzpicture}[
            scale=1,
            vertex/.style={circle, fill=black, draw=black, minimum size=6pt, inner sep=0pt},
        ]

            \draw (0,0) circle (2);

            \coordinate (v1) at (1, 1);
            \coordinate (v2) at (0.6, 0.1);
            \coordinate (v3) at (1.4, -0.1);
            \coordinate (v4) at (1, -1);

            \coordinate (w1) at (4, 1);
            \coordinate (w2) at (4, 0.2);
            \coordinate (w3) at (4, -0.1);
            \coordinate (w4) at (4, -1);

            \coordinate (u1) at (-0.2, -0.1);
            \coordinate (u2) at (-0.1, 1);

            \draw (v1) -- node[above] {} (u1);
            \draw (v2) -- node[above] {} (u2);
            \draw (v3) -- node[below] {} (u2);
            \draw (v4) -- node[below] {} (u1);

            \draw (v1) -- node[above] {} (v2);
            \draw (v2) -- node[above] {} (v4);
            \draw (v3) -- node[below] {} (v4);
            \draw (v3) -- node[below] {} (v1);

            \draw (v1) -- node[above] {$c_1$} (w1);
            \draw (v2) -- node[above] {$c_2$} (w2);
            \draw (v3) -- node[below] {$c_2$} (w3);
            \draw (v4) -- node[below] {$c_1$} (w4);

            \node[vertex, label=left:$s$] at (v1) {};
            \node[vertex, label=left:$v$] at (v2) {};
            \node[vertex, label=left:$u$] at (v3) {};
            \node[vertex, label=left:$t$] at (v4) {};

            \node[vertex, label=right:$s'$] at (w1) {};
            \node[vertex, label=right:$v'$] at (w2) {};
            \node[vertex, label=right:$u'$] at (w3) {};
            \node[vertex, label=right:$t'$] at (w4) {};

            \node[vertex, label=left:$x$] at (u1) {};
            \node[vertex, label=left:$y$] at (u2) {};

            \node at (-1,0) {\Large $G$};

            \draw[dashed]
                (0.9,0) ellipse (0.8 and 1.5);

            \node at (0.6,1.6) {$U$};
        \end{tikzpicture}
        \caption{Neighborhood of $U$ in the case of identical pair of colors on the precolored edges}
        \label{fig:4-vertical-worst_case}
    \end{figure}

    In this case, we define a temporary precoloring on $G$ as follows: let $yv, xs$ be precolored with a color $c_3$, and $yu, xt$ with a color $c_4$ (where $c_3, c_4 \notin \{c_1, c_2\}$). Since $G$ is subcubic and we have four precolored edges, by Theorem~\ref{3-subcubic} and Lemma~\ref{extra-color}, this can be extended to a proper 5-edge-coloring $\phi$ of $G$.
    
    To obtain the final coloring, we modify $\phi$ on the edges of the 4-cycle $(s,v,t,u)$. We set $\phi(tu)=c_3$, $\phi(vs)=c_4$, and $\phi(vt)=\phi(su)=c_5$. This preserves the propriety of the coloring, and the colors $c_1, c_2$ are no longer used on any edges incident to $U$ in $G$. Thus, the edges $ss', tt', uu',$ and $vv'$ can be colored with their prescribed colors $c_1$ and $c_2$ without conflict. This yields a proper 5-edge-coloring of $G'$, which in turn provides a proper extension of the precoloring in $G \square K_2$.
\end{proof}

A natural question arises as to whether Theorem~\ref{subcubic_Cartesian} can be generalized. For example, is it true that if the maximum degree of $G$ is four, then any precoloring of five edges in $G \square K_2$ can be extended to a proper 6-edge-coloring? While the answer is unknown, there is a fact suggesting that this generalization might not hold. Specifically, Theorem~\ref{3-subcubic} cannot be generalized in this manner. Consider, for example, the 4-regular graph $K_5$. If we precolor three edges of a triangle and one additional edge vertex-disjoint from the triangle (four edges in total) using four distinct colors, the precoloring cannot be extended using five colors. Thus, even if Theorem~\ref{subcubic_Cartesian} is generalizable, entirely new approaches would certainly be required.

\section{Concluding remarks}

In this paper, we propose Conjecture \ref{general} as a far-reaching generalization of Conjecture \ref{basic}. Establishing this general conjecture would resolve numerous open problems concerning the extendability of edge-precolorings in Cartesian products. Theorem \ref{edge} may be viewed as an initial step toward this goal.

Several directions for further research naturally arise. One major challenge is to relax the structural assumptions in $G$ in Theorem \ref{edge}. Specifically, it would be of great interest to eliminate the requirements of triangle-freeness and/or regularity. Since both assumptions play a crucial role in the counting arguments and the projection-exchange techniques used in our current proofs, addressing more general classes of graphs will likely require substantially new ideas.

Another promising avenue for future study involves identifying additional classes of graphs $H$ for which Conjecture \ref{general} holds, beyond stars, even cycles, and trees. Although many potential candidates may be considered, complete bipartite graphs stand out as especially intriguing.

\begin{conjecture}
    Let $G$ be an $r$-regular, triangle-free graph. If any precoloring of at most $k < r$ edges of $G$ can be extended to a proper $\chi'(G)$-edge-coloring of $G$, then any precoloring of at most $k+n$ edges of $G \square K_{n,n}$ can be extended to a proper $(\chi'(G)+n)$-edge-coloring of $G \square K_{n,n}$.
\end{conjecture}
    
Establishing this conjecture (or more general for $H = K_{n,m}$ instead of $H = K_{n,n}$) would not only deepen our understanding of the precoloring extension in Cartesian products, but would also imply a solution to the following conjecture of Casselgren, Markström and Pham.

\begin{conjecture} \cite{hypercube}
    If $n$ and $d$ are positive integers, and $\varphi$ is a proper edge-precoloring of $K_{n,n}^d$, with at most $nd - 1$ precolored edges, then $\varphi$ extends to a proper $nd$-edge-coloring of $K_{n,n}^d$.
\end{conjecture}

\section*{Statements and declarations}
The author declares that they have no conflict of interest.
Data sharing is not applicable to this article, as no datasets were generated or analyzed during the current study.

\end{document}